\documentclass[a4paper,12pt]{article}
\usepackage{latexsym, amsmath, amsfonts, amscd, amssymb, verbatim}
\setlength{\textwidth}{6in} 
\setlength{\topmargin}{-0.2in}
\setlength{\textheight}{9.0in} 
\usepackage{graphicx}	
\usepackage{xcolor}

\def\tto{\;{\lower 1pt \hbox{$\rightarrow$}}\kern -10pt
	\hbox{\raise 2pt \hbox{$\rightarrow$}}\;}

\begin{document}
	\pagestyle{myheadings}
	\newtheorem{Theorem}{Theorem}[section]
	\newtheorem{Proposition}[Theorem]{Proposition}
	\newtheorem{Remark}[Theorem]{Remark}
	\newtheorem{Lemma}[Theorem]{Lemma}
	\newtheorem{Corollary}[Theorem]{Corollary}
	\newtheorem{Definition}[Theorem]{Definition}
	\newtheorem{Example}[Theorem]{Example}
	\renewcommand{\theequation}{\thesection.\arabic{equation}}
	\normalsize
	\setcounter{equation}{0}

\title{\bf Generalized Polyhedral DC Optimization Problems}
 
\author{Vu Thi Huong\footnote{Institute of Mathematics, Vietnam Academy of Science and Technology, 18 Hoang Quoc Viet, Hanoi 10072, Vietnam; email: vthuong@math.ac.vn.},\ \, Duong Thi Kim Huyen\footnote{ORLab, Faculty of Computer Science, Phenikaa University, Hanoi 12116, Vietnam; Email: huyen.duongthikim@phenikaa-uni.edu.vn.},\ \, and Nguyen Dong Yen\footnote{Institute of Mathematics, Vietnam Academy of Science and Technology, 18 Hoang Quoc Viet, Hanoi 10307, Vietnam; email: ndyen@math.ac.vn; Corresponding author.}
	}\maketitle
\date{}

\medskip
\begin{quote}
\noindent {\bf Abstract.}\ The problem of minimizing the difference of two lower semicontinuous, proper, convex functions (a DC function) on a nonempty closed convex set in a locally convex Hausdorff topological vector space is studied in this paper. The focus is made on the situations where either the second component of the objective function is a generalized polyhedral convex function or the first component of the objective function is a generalized polyhedral convex function and the constraint set is generalized polyhedral convex. Various results on optimality conditions, the local solution set, the global solution set, and solution algorithms via duality are obtained. Useful illustrative examples are considered.

\medskip
\noindent {\bf Mathematics Subject Classification (2010).}\ 91A05, 91A10, 90C05, 49J53.

\medskip
\noindent {\bf Key Words.}\ Difference of convex functions, generalized polyhedral convex function, generalized polyhedral convex set, optimality condition, local solution set, global solution set, DCA scheme.
 
\end{quote}
 
\section{Introduction}\label{sect_Introduction}
\markboth{\sc v. t. huong, d. t. k. huyen, and n.~d.~yen}{\sc  v. t. huong, d. t. k. huyen, and n.~d.~yen}
\setcounter{equation}{0}

A \textit{polyhedral convex set} (a convex polyhedron in brief) in $\mathbb R^n$ or, more generally, in a finite-dimensional normed space $X$, is the intersection of finitely  many closed half-spaces.
Since the intersection of an empty family of closed half-spaces is  
whole space by convention, empty set and the whole space are  special polyhedra. For every given convex polyhedron, there exist  
a finite number of points and a finite number of directions such that the polyhedron can be represented as the sum of the convex hull of those points and the convex cone generated by those directions. The converse is also true. This fundamental result is presented in~\cite[Theorem~19.1]{Rock_book_1970} and is attributed~\cite[p.~427]{Rock_book_1970} primarily to H.~Minkowski and H.~Weyl. The theorem allows one to easily prove fundamental solution existence theorems in linear programming. It is also the basis of Contesse's proofs of the necessary and
sufficient second-oder conditions for a local solution and for a locally unique solution in quadratic programming in~\cite{Contesse_1980}.

Representation formulas in the spirit of~\cite[Theorem~19.1]{Rock_book_1970} were 
obtained by Zheng~\cite{Zheng_2009} for \textit{convex polyhedra} and \textit{generalized convex polyhedra} in Banach spaces. Luan and Yen~\cite{Luan_Yen_2020} have shown that similar representation formulas are valid for convex polyhedra and generalized convex polyhedra in locally convex Hausdorff topological vector spaces.
Applications of the representation formulas to obtaining
solution existence theorems for generalized linear programming problems and generalized linear vector optimization
problems can be found in~\cite{Luan_Yen_2020}.

Generalized polyhedral convex sets, generalized polyhedral
convex functions on Hausdorff locally convex topological vector spaces, and the related constructions such as sum of sets, sum
of functions, directional derivative, infimal convolution, normal
cone, conjugate function, subdifferential have been studied by Luan et al.~\cite{Luan_Yao_Yen_2018}.

\textit{Generalized polyhedral convex optimization problems} in locally convex Hausdorff topological vector spaces have been investigated systematically by Luan and Yao~\cite{Luan_Yao_JOGO_2019}, where  solution existence theorems, necessary and sufficient optimality conditions, weak and strong duality theorems are proved. 

As far as we know, the problem of minimizing the difference of two lower semicontinuous, proper, convex functions (a DC function) on a nonempty closed convex set in a locally convex Hausdorff topological vector space has not been considered until now. Our aim is to study this problem, making focus on the situations where either the second component of the objective function is a generalized polyhedral convex function or the first component of the objective function is a generalized polyhedral convex function and the constraint set is generalized polyhedral convex. Note that, in finite-dimensional settings, the problem has been considered by Pham Dinh and Le Thi~\cite{Tao_An_AMV97}, Polyakova~\cite{Polyakova_2011}, Hang and Yen~\cite{HangNTV_Yen_2016}, vom Dahl and~L\"ohne~\cite{vom Dahl_Lohne_2020} from different points of view. 

The ingenious \textit{DC algorithms} (usually called~\textit{DCA} for brevity), which allow one to decompose the given nonconvex optimization problem into that of solving two sequences of convex programs in the primal space and the dual space respectively, were suggested and employed for polyhedral DC programming in~\cite{Tao_An_AMV97}. For comprehensive surveys of the successful development and numerous applications of DCA, the interested reader is referred to~\cite{An_Tao_2017,An_Tao_2024}. Necessary and sufficient conditions for a global solution of an unconstrained polyhedral DC program via hypodifferentials of polyhedral convex functions were given in~\cite[Theorems~5 and~6]{Polyakova_2011}.  Optimality conditions via the subdifferential in the sense of convex analysis, the Fr\'echet subdifferential, and the Mordukhovich subdifferential for unconstrained and linearly constrained polyhedral DC programs, and the relationships between these conditions, along with the existence and computation of descent and steepest descent directions, were investigated in~\cite{HangNTV_Yen_2016}. The solution existence and a method to globally solve polyhedral DC optimization problems via concave minimization were addressed in~\cite{vom Dahl_Lohne_2020}.

We will establish not only fundamental qualitative properties of generalized polyhedral DC optimization problems, but also remarkable features of DCA applied to these problems. Theorems~\ref{thm_structure1} and~\ref{thm_structure2} on the structure of the local solution set and the structure of the global solution set, Theorems~\ref{thm_const_value} and~\ref{thm_const_value_2} on the constancy of the objective function on each connected component of the solution set in question, as well as Theorems~\ref{Thm_DCA2} and \ref{Thm_DCA2a} on the cyclic behavior of DCA iterative sequences are new even in finite dimensions. Note that many proofs herein (for example, the proofs of Theorems~\ref{thm_structure1} and~\ref{thm_const_value}) rely on refined arguments and special constructions.

The organization of this paper is as follows. In Section~\ref{sect_Preliminaries}, we present basic notions and several auxiliary results on generalized polyhedral convex sets and functions, generalized polyhedral DC optimization, conjugate functions and duality. A series of new results on optimality conditions, the local solution set, the global solution set of generalized polyhedral DC optimization problems on Hausdorff locally convex topological vector spaces are obtained in Sections~\ref{sect_3} and~\ref{sect_4}. Section~\ref{sect_5} is devoted to DCA schemes, which are solution algorithms via duality. Useful illustrative examples are considered in Sections~\ref{sect_3}--~\ref{sect_5}. Section~\ref{sect_Conclusions} gives some concluding remarks.

\section{Preliminaries}\label{sect_Preliminaries}
\markboth{\sc v. t. huong, d. t. k. huyen, and n.~d.~yen}{\sc  v. t. huong, d. t. k. huyen, and n.~d.~yen}
\setcounter{equation}{0}

From now on, if not otherwise stated, $X$ is a {\it locally convex Hausdorff topological vector space}. We denote by $X^*$ the dual space of $X$ and by $\langle x^*, x \rangle$ the value of $x^* \in X^*$ at $x \in X$. It is assumed that $X^*$ is equipped with the weak$^*$ topology. For any subset $\Omega\subset X$, ${\rm int}\Omega$ denotes the interior of $\Omega$ and ${\rm co}\Omega$ stands for the convex hull of $\Omega$. By ${\rm cone}\,\Omega$ we denote cone generated by~$\Omega$, that is, ${\rm cone}\,\Omega=\{tx\mid t\geq 0,\; x\in\Omega\}$. The set of nonnegative integers is denoted by~$\mathbb N$. The convention $(+\infty)-(+\infty)=+\infty$ will be used in the sequel.

\subsection{Generalized polyhedral convex sets and functions}\label{subsect_pdco}

\begin{Definition}{\rm (See \cite[p.~133]{Bonnans_Shapiro_2000})\label{Def_gpcs}
		A subset $C\subset X$ is said to be a \textit{generalized polyhedral convex set}, or a \textit{generalized convex polyhedron}, if there exist $x^*_k \in X^*$, $\alpha_k \in \mathbb R$, $k=1,\dots,p$, and a closed affine subspace $L \subset X$, such that 
		\begin{equation}\label{eq_def_gpcs}
			C=\big\{ x \in X \mid x \in L,\ \langle x^*_k,x\rangle \leq \alpha_k,\ k=1,\ldots,p\big\}.
		\end{equation} 
		If the set $C$ can be represented in the form \eqref{eq_def_gpcs} with $L=X$, then we say that it is a \textit{polyhedral convex set}, or a \textit{convex polyhedron}.}
\end{Definition}

If $X$ is a finite-dimensional locally convex Hausdorff topological vector space (in particular, $X=\mathbb R^n$), then the notions of generalized convex polyhedron and convex polyhedron coincide.

Let $C$ be given by \eqref{eq_def_gpcs}. By~\cite[Remark 2.196]{Bonnans_Shapiro_2000}, there exists a continuous surjective linear mapping $A:X\to Y$ and a vector $y\in Y$ such that $L=\{x\in X\mid A(x)=y\}$. So, we have
	\begin{equation}\label{equivalence_gpcs}
		C=\{x\in X\mid A(x)=y,\ \langle x^*_k,x\rangle \leq \alpha_k,\ k=1,\dots,p\}.
	\end{equation}
For a function $f$ from a locally convex Hausdorff topological vector space $X$ to the extended real line $\overline{\mathbb{R}}:=\mathbb{R} \cup \{+\infty\}\cup \{-\infty\}$, one defines the \textit{effective domain} and the \textit{epigraph} of $f$, respectively, by setting ${\rm dom}f=\{x \in X \mid f(x) < +\infty\}$ and $${\rm epi}f=\big\{(x, \alpha) \in X \times \mathbb{R} \mid x \in {\rm dom}f,\ f(x) \leq \alpha \big\}.$$  If ${\rm dom}f$ is nonempty and $f(x) > - \infty$ for all $x \in X$, then $f$ is said to be \textit{proper}. We say that $f$ is  \textit{convex} if  ${\rm epi}f$ is a convex set in $X \times \mathbb{R}$. 

\begin{Definition} {\rm (See~\cite[Definition~3.1]{Luan_Yao_Yen_2018}) Let $X$ be a locally convex Hausdorff topological vector space. A function $f:X\to\overline{\mathbb{R}}$ is called \textit{generalized polyhedral convex} (resp., \textit{polyhedral convex}) if ${\rm epi}\,f$ is a generalized polyhedral convex set (resp., a polyhedral convex set) in $X\times\mathbb R$.}
\end{Definition}

It turns out that a generalized polyhedral convex function (resp., a polyhedral convex function) can be characterized as the maximum of a finite family of continuous affine functions on a certain generalized polyhedral convex set (resp., a polyhedral convex set).

\begin{Lemma}\label{lem_rep_gpcf} {\rm (See \cite[Theorem~3.2]{Luan_Yao_Yen_2018})} Suppose that $\psi:X\to\overline{\mathbb{R}}$ is a proper function. Then $\psi$ is generalized polyhedral convex (resp., polyhedral convex) if and only if ${\rm dom}\psi$ is a generalized polyhedral convex set (resp., a polyhedral convex set) in $X$ and there exist $w_k^* \in X^*$, $\gamma_k \in \mathbb{R}$, for $k=1,\ldots,r$, such that
	\begin{equation*}\label{eq_rep_gcpf}
		\psi(x)=\begin{cases}
			\max \big\{ \langle w_k^*, x \rangle + \gamma_k \mid k=1,\dots,r  \big\} &\text{if } x \in {\rm dom}\psi,\\
			+\infty & \text{if } x \notin {\rm dom}\psi.
		\end{cases}
	\end{equation*}	  
\end{Lemma}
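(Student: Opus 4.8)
The plan is to prove both implications by working directly with the epigraph and exploiting the fact that an epigraph is invariant under adding nonnegative amounts to its last coordinate. For the sufficiency direction I would assume that $\mathrm{dom}\,\psi$ is generalized polyhedral convex and that $\psi$ coincides with $\max_{1\le k\le r}(\langle w_k^*,\cdot\rangle+\gamma_k)$ on its domain. Then the condition $\psi(x)\le\alpha$ is equivalent to the finite system $\langle w_k^*,x\rangle+\gamma_k\le\alpha$, $k=1,\dots,r$, so I would write $\mathrm{epi}\,\psi=(\mathrm{dom}\,\psi\times\mathbb{R})\cap\bigcap_{k=1}^{r}H_k$, where each $H_k=\{(x,\alpha)\mid\langle w_k^*,x\rangle-\alpha\le-\gamma_k\}$ is a closed half-space of $X\times\mathbb{R}$ cut out by the continuous linear functional $(x,\alpha)\mapsto\langle w_k^*,x\rangle-\alpha$. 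Since $\mathrm{dom}\,\psi\times\mathbb{R}$ is generalized polyhedral convex in $X\times\mathbb{R}$ (its defining affine subspace is $L\times\mathbb{R}$ and its half-space constraints lift verbatim from those of $\mathrm{dom}\,\psi$), and intersecting a generalized polyhedral convex set with finitely many half-spaces preserves the property, it follows that $\mathrm{epi}\,\psi$ is generalized polyhedral convex, i.e. $\psi$ is generalized polyhedral convex. The polyhedral case is identical with $L=X$ throughout.

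For the necessity direction I would fix a representation $\mathrm{epi}\,\psi=M\cap\bigcap_{j=1}^{q}\{(x,\alpha)\mid\langle u_j^*,x\rangle+s_j\alpha\le\beta_j\}$, where $M$ is a closed affine subspace of $X\times\mathbb{R}$ and $(u_j^*,s_j)\in X^*\times\mathbb{R}$. The decisive observation is the upward invariance of $\mathrm{epi}\,\psi$ in the last coordinate, from which I would extract two structural facts. First, for any $(x,\alpha)\in\mathrm{epi}\,\psi$ the point $(x,\alpha+t)$ also lies in $\mathrm{epi}\,\psi\subset M$ for every $t>0$, so the vertical direction $\{0\}\times\mathbb{R}$ belongs to the direction space $M-M$; this forces every affine equation defining $M$ to have zero coefficient on $\alpha$, whence $M=L\times\mathbb{R}$ for a closed affine subspace $L\subset X$ (and $L=X$ in the polyhedral case). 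Second, each defining inequality must survive arbitrary upward shifts on points of the epigraph, which forces $s_j\le0$ for all $j$.

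I would then split the index set by the sign of $s_j$. The constraints with $s_j=0$ involve only $x$ and, together with $x\in L$, cut out a generalized polyhedral convex set $D\subset X$. Each constraint with $s_j<0$ can be renormalized by dividing through by $-s_j>0$ into the form $\alpha\ge\langle w_k^*,x\rangle+\gamma_k$, with $w_k^*=u_j^*/(-s_j)$ and $\gamma_k=\beta_j/s_j$. Consequently, for $x\in D$ the fibre $\{\alpha\mid(x,\alpha)\in\mathrm{epi}\,\psi\}$ equals $[\max_k(\langle w_k^*,x\rangle+\gamma_k),+\infty)$, while for $x\notin D$ the fibre is empty. This identifies $\mathrm{dom}\,\psi$ with $D$ (hence generalized polyhedral convex) and yields $\psi(x)=\max_k(\langle w_k^*,x\rangle+\gamma_k)$ on $\mathrm{dom}\,\psi$, the minimum being attained so that the representation holds with equality. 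Properness of $\psi$ guarantees that at least one index with $s_j<0$ is present (otherwise the fibre over a point of $D$ would be all of $\mathbb{R}$, forcing $\psi\equiv-\infty$ there), so the family of affine minorants is nonempty and $r\ge1$.

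The step I expect to be the main obstacle is the necessity direction, specifically extracting the product structure $M=L\times\mathbb{R}$ and the sign condition $s_j\le0$ cleanly from upward-closedness in the topological vector space setting, and confirming that the least fibre value is actually attained so the maximum is genuine rather than a mere infimum. Once these reductions are secured, the identification of $\mathrm{dom}\,\psi$ with $D$ and the reading-off of the affine pieces $w_k^*,\gamma_k$ are routine.
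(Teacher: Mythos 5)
Your argument is correct, and note that the paper itself offers no proof of this lemma: it is imported verbatim from Theorem~3.2 of Luan--Yao--Yen (2018), so there is nothing internal to compare against; what you have written is a self-contained epigraph-based derivation of the cited result. The sufficiency half is exactly right: $\mathrm{epi}\,\psi=(\mathrm{dom}\,\psi\times\mathbb{R})\cap\bigcap_k H_k$, and lifting the representation of $\mathrm{dom}\,\psi$ to $L\times\mathbb{R}$ shows the epigraph is generalized polyhedral convex. In the necessity half, your two ``obstacles'' are in fact harmless, but tighten the wording: (i) $M$ is an abstract closed affine subspace, not a set given by equations, so argue instead that since $(0,1)$ lies in the direction space of $M$ (using $\mathrm{epi}\,\psi\neq\emptyset$, which properness gives), one has $M+\mathrm{span}\{(0,1)\}=M$, hence $M=L\times\mathbb{R}$ with $L$ the projection of $M$ onto $X$; closedness of $L$ follows because $L\times\{0\}=M\cap(X\times\{0\})$ is closed. (ii) Attainment of the least fibre value needs no separate verification: for $x\in\mathrm{dom}\,\psi$ the fibre of $\mathrm{epi}\,\psi$ over $x$ is by definition exactly $[\psi(x),+\infty)$ (with $\psi(x)$ finite by properness), so equating it with $\bigl[\max_k(\langle w_k^*,x\rangle+\gamma_k),+\infty\bigr)$ immediately gives $\psi(x)=\max_k(\langle w_k^*,x\rangle+\gamma_k)$. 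The sign argument $s_j\le 0$, the identification $\mathrm{dom}\,\psi=D$, and the use of properness to guarantee at least one index with $s_j<0$ are all sound, so with those two clarifications the proof is complete.
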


\subsection{Generalized polyhedral DC optimization}\label{subsect_pdco}

Let $g, h:X\to\overline{\mathbb{R}}$ be lower semicontinuous, proper, convex functions and $C\subset X$ a nonempty closed convex set. \textit{It is assumed that} $({\rm dom}\, g)\cap C\neq\emptyset$. The minimization problem
\begin{equation}\label{DC_constraint_prob}
	{\rm Minimize}\ \; f(x):=g(x)-h(x),\ \, x\in C
\end{equation} is called a \textit{DC optimization problem} (a \textit{DC problem} for brevity). As usual, $g$ and $h$ are called the \textit{convex components} of the objective function $f$. Denote the \textit{solution set} of~\eqref{DC_constraint_prob} by ${\mathcal S}$ and the \textit{local solution set} by ${\mathcal S}_1$. By definition, $\bar x\in {\mathcal S}$ if and only if $\bar x\in C$ and $f(\bar x)\leq f(x)$ for all $x\in C$. Similarly, $\bar x\in {\mathcal S}_1$ if and only if $\bar x\in C$ and there exists a neighborhood $U$ of $\bar x$ such that $f(\bar x)\leq f(x)$ for all $x\in C\cap U$. 

Clearly, $\bar x$ is a solution (resp., a local solution) of~\eqref{DC_constraint_prob} if and only if it a solution (resp., a local solution) the unconstrained DC problem
\begin{equation}\label{DC_unconstraint_prob}
	{\rm Minimize}\ \; \big(g(x)+\delta_C(x)\big)-h(x), \ x\in X, 
\end{equation} where $\delta_C(x)= 0$ for $x\in C$, and $\delta_C(x)=+\infty$ for $x\notin C$,  is the \textit{indicator function} of the constraint set $C$.

\begin{Definition}\label{def_gpdc} {\rm If $C$ is a generalized polyhedral convex set and $g,h$ are generalized polyhedral convex functions, then~\eqref{DC_constraint_prob} is said to be a \textit{generalized polyhedral DC optimization problem} (a \textit{gpdc program} for brevity).}
\end{Definition}

According to Lemma~\ref{lem_rep_gpcf}, if~\eqref{DC_constraint_prob} is a gpdc program (resp., a pdc program), then ${\rm dom}g$ and ${\rm dom}h$ are generalized polyhedral convex sets (resp., polyhedral convex sets). In addition,  there exist vectors $u_i^*$ and $v_j^*$ in  $X^*$, real numbers $\alpha_i$ and $\beta_j$ for $i\in I=\{1,\dots,m\}$, $j\in J=\{1,\dots,q\}$, such that 
\begin{equation}\label{g}
	g(x)=\max\limits_{i\in I} \big[\langle u_i^*, x \rangle + \alpha_i]\ \; {\rm for\ all}\ x \in {\rm dom}g,
\end{equation}
\begin{equation}\label{h}
	h(x)=\max\limits_{j\in J} \big[\langle v_j^*, x \rangle + \beta_j]\ \; {\rm for\ all}\ x \in {\rm dom}h.
\end{equation}
For our convenience, we put $g_i(x)=\langle u_i^*, x \rangle + \alpha_i$, $h_j(x)=\langle v_j^*, x \rangle + \beta_j$,
\begin{equation}\label{active_indexes}
	I(x)=\left\{i\in I\;\big|\;  g_i(x) = g(x)\right\},\ \; J(x)=\left\{j\in J\; \big|\; h_j(x) = h(x)\right\}
\end{equation} for all $x\in X$.
 
\subsection{Conjugate functions and duality}
	Let $\Gamma_0 (X)$ be the set of all extended-real-valued lower semicontinuous, proper, convex functions on $X$. Clearly, any proper generalized polyhedral convex function on $X$ belongs to $\Gamma_0 (X)$. 
	
	The Fenchel \textit{conjugate function} $g^*:X^*\to\overline{\mathbb{R}}$ of a function $g:X\to\overline{\mathbb{R}}$ belonging to $\Gamma_0 (X)$ is defined by
	$$ g^*(x^*)=\sup \{\langle x^*,x\rangle-g(x) \mid x\in X\}\quad\, \forall\,x^*\in X^*.$$
	It is well known~\cite[Propostion~3, p.~174]{Ioffe_Tihomirov_1979} that $g^*: X\to \overline{\mathbb R}$  is also a lower semicontinuous, proper, convex function, i.e., $g^*\in\Gamma_0 (X^*)$. From the definition it follows that
	\begin{equation*}\label{inequality_of_conjugate_function}
		g(x)+g^*(x^*)\ge \langle x^*,x\rangle\quad \forall  x\in X,\, \forall x^*\in X^*.
	\end{equation*}
	
	Denote by $g^{**}$ the conjugate function of $g^*$, that is,
	$$ g^{**}(x)=\sup \{\langle x^*,x\rangle-g^*(x^*)\mid x^*\in X^*\} \quad \forall  x\in X.$$
	For every $g\in\Gamma_0 (X)$, by the Fenchel-Moreau theorem (see, e.g.,~\cite[Theorem~1, p.~175]{Ioffe_Tihomirov_1979}) one has $g^{**}(x)=g(x)$ for all $x\in X$.
	Many duality theorems for convex optimization problems, as well as for DC optimization problems, rely on this result.
	
The following properties of conjugate functions are well known. The proofs given in~\cite{LTY_JOGO2023} for a Hilbert space setting also apply to convex functions on locally convex Hausdorff topological vector spaces.
	
		\begin{Proposition}\label{inclusion_of_x_and_equality}{\rm (see~\cite[Proposition~1, p.~197]{Ioffe_Tihomirov_1979} and~\cite[Proposition 2.1]{LTY_JOGO2023})} For any function $g\in\Gamma_0 (X)$ and for any pair $(x,x^*)\in X\times X^*$, the inclusion $x\in \partial g^*(x^*)$ is equivalent to the equality $g(x)+g^*(x^*)=\langle x^*,x\rangle.$
	\end{Proposition}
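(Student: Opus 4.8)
The plan is to prove both implications directly from the definition of the subdifferential together with the two facts already recorded in the excerpt: the Fenchel--Young inequality $g(x)+g^*(x^*)\ge\langle x^*,x\rangle$, valid for every pair $(x,x^*)$, and the Fenchel--Moreau identity $g^{**}=g$ that holds for every $g\in\Gamma_0(X)$. The one point that needs care at the outset is the precise meaning of $\partial g^*(x^*)$: since $g^*$ is a function on $X^*$ and $X^*$ carries the weak$^*$ topology, the topological dual of $X^*$ is canonically $X$, so $\partial g^*(x^*)$ is legitimately a subset of $X$, and the inclusion $x\in\partial g^*(x^*)$ unwinds to the subgradient inequality $g^*(y^*)\ge g^*(x^*)+\langle y^*-x^*,x\rangle$ for all $y^*\in X^*$.

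For the implication from the equality to the inclusion, I would start from $g(x)+g^*(x^*)=\langle x^*,x\rangle$, rewrite it as $g^*(x^*)=\langle x^*,x\rangle-g(x)$, and substitute this into the right-hand side of the subgradient inequality; the two copies of $\langle x^*,x\rangle$ cancel, and what remains to be verified reduces to $g^*(y^*)\ge\langle y^*,x\rangle-g(x)$ for every $y^*\in X^*$. But this is nothing more than the defining inequality of the conjugate $g^*(y^*)=\sup_{z}\{\langle y^*,z\rangle-g(z)\}$ evaluated at the particular point $z=x$, so the required subgradient inequality holds and $x\in\partial g^*(x^*)$ follows at once.

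For the converse, assume $x\in\partial g^*(x^*)$ and rearrange the subgradient inequality as $g^*(x^*)-\langle x^*,x\rangle\le g^*(y^*)-\langle y^*,x\rangle$ for all $y^*\in X^*$. Taking the infimum over $y^*$ turns the right-hand side into $-\sup_{y^*}\{\langle y^*,x\rangle-g^*(y^*)\}=-g^{**}(x)$, and here the Fenchel--Moreau theorem enters decisively, giving $g^{**}(x)=g(x)$. This yields $g(x)+g^*(x^*)\le\langle x^*,x\rangle$, and combining it with the reverse Fenchel--Young inequality already noted produces the desired equality $g(x)+g^*(x^*)=\langle x^*,x\rangle$.

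The argument is short and essentially forced once the pairings are set up correctly; I expect the only genuine subtlety --- the main obstacle, if any --- to be the bookkeeping of the weak$^*$ duality $(X^*)^*=X$, which is what makes it meaningful to take the subdifferential of the conjugate in $X$ and to interpret the pairing $\langle y^*,x\rangle$ consistently in both directions of the proof. Everything else is a direct appeal to the definition of $g^*$ and to the identity $g^{**}=g$.
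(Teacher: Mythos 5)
Your proof is correct. The paper does not prove this proposition itself --- it delegates the proof to~\cite[Proposition~1, p.~197]{Ioffe_Tihomirov_1979} and~\cite[Proposition 2.1]{LTY_JOGO2023} --- and your argument is precisely the standard one behind those citations: the direction ``equality $\Rightarrow$ inclusion'' is the defining (Fenchel--Young) inequality for $g^*$ evaluated at $z=x$, while the converse takes the infimum over $y^*$ in the subgradient inequality and invokes the Fenchel--Moreau identity $g^{**}=g$, both of which are recorded in the paper's Section~2.3. Your observation that the weak$^*$ topology on $X^*$ makes $(X^*)^*=X$, so that $\partial g^*(x^*)\subset X$ and the pairings are consistent, is exactly the point the paper itself stresses before Theorem~\ref{Toland-Singer's_thm}, so the proposal fits the intended argument with no gap.
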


\begin{Proposition}\label{Equivalence_of_two_inclusions} {\rm (see, e.g.,~\cite[Proposition 2.2]{LTY_JOGO2023}))} For any $g\in\Gamma_0 (X)$ and for any pair $(x,x^*)\in X\times X^*$, the inclusions $x^*\in \partial g(x) $ and $ x\in \partial g^*(x^*)$ are equivalent.
\end{Proposition}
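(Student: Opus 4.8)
The plan is to reduce both inclusions to a single scalar identity and then chain the resulting equivalences. Specifically, I would show that each of the two inclusions is equivalent to the \emph{Fenchel--Young equality}
$$g(x)+g^*(x^*)=\langle x^*,x\rangle.$$
For the inclusion $x\in\partial g^*(x^*)$ this equivalence is already supplied verbatim by Proposition~\ref{inclusion_of_x_and_equality}, so the only genuine task is to prove that $x^*\in\partial g(x)$ is equivalent to the very same equality. Once both inclusions are known to be equivalent to this equality, their mutual equivalence is immediate.

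To establish the remaining equivalence I would argue directly from the definitions of the subdifferential and the conjugate. By definition, $x^*\in\partial g(x)$ means $g(y)\ge g(x)+\langle x^*,y-x\rangle$ for every $y\in X$, which rearranges to $\langle x^*,x\rangle-g(x)\ge\langle x^*,y\rangle-g(y)$ for all $y\in X$. Taking the supremum over $y\in X$ of the right-hand side and recalling the definition of $g^*$, this reads $\langle x^*,x\rangle-g(x)\ge g^*(x^*)$, that is, $\langle x^*,x\rangle\ge g(x)+g^*(x^*)$. Since the reverse inequality $g(x)+g^*(x^*)\ge\langle x^*,x\rangle$ holds unconditionally (the Fenchel--Young inequality recorded just after the definition of the conjugate in Subsection~2.3), the subgradient inclusion forces equality; and reading the chain of implications backwards yields the converse. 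Hence $x^*\in\partial g(x)$ holds if and only if $g(x)+g^*(x^*)=\langle x^*,x\rangle$, and combining this with Proposition~\ref{inclusion_of_x_and_equality} gives the asserted equivalence of $x^*\in\partial g(x)$ and $x\in\partial g^*(x^*)$.

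A tidier alternative, which I would mention rather than rederive, is to invoke Proposition~\ref{inclusion_of_x_and_equality} \emph{with $g$ replaced by $g^*$}. Because $X^*$ carries the weak$^*$ topology, its dual is $X$ itself, so the conjugate of $g^*\in\Gamma_0(X^*)$ is the biconjugate $g^{**}$, and the Fenchel--Moreau theorem gives $g^{**}=g$. Under this substitution the proposition reads: $x^*\in\partial g^{**}(x)=\partial g(x)$ is equivalent to $g^*(x^*)+g^{**}(x)=\langle x^*,x\rangle$, i.e. to the same Fenchel--Young equality. The only point requiring care on this second route is the duality bookkeeping---the identification $(X^*, w^*)^*=X$ and the correct pairing of $x$ against $x^*$---which is precisely where the standing assumption that $X^*$ is equipped with the weak$^*$ topology enters; apart from that, the result is elementary, so I do not expect any substantial obstacle.
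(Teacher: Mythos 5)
Your proposal is correct, and it follows essentially the same route as the source the paper relies on: the paper itself gives no proof of this proposition, only the citation to \cite[Proposition~2.2]{LTY_JOGO2023}, whose standard argument is exactly your chain of equivalences through the Fenchel--Young equality $g(x)+g^*(x^*)=\langle x^*,x\rangle$ (Proposition~\ref{inclusion_of_x_and_equality}) together with the biconjugation step $g^{**}=g$ justified by $(X^*,w^*)^*=X$. Both of your variants (the direct rearrangement of the subgradient inequality and the substitution of $g^*$ into Proposition~\ref{inclusion_of_x_and_equality}) are sound, so there is nothing to add.
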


\begin{Definition}\label{Dual_DC_Programming}{\rm For any $g,h \in \Gamma_0 (X)$, the DC program
			\begin{equation}\label{dual_program}
			{\rm Minimize}\ \; h^*(x^*)-g^*(x^*),\ \, x^*\in X^*
			\end{equation}   
	is said to be the \textit{dual problem} of the DC optimization problem
	\begin{equation}\label{DC_unconstrained_prob}
		{\rm Minimize}\ \; g(x)-h(x),\ \, x\in X.
	\end{equation}}
	\end{Definition} 

Since $X^*$ is equipped with the weak$^*$ topology by our assumption, the dual space of $X^*$ is $X$ (see, e.g.,~\cite[Lemma~2.1]{Luan_Yen_2023}), i.e., $X^{**}=X$. It is worthy to stress that \textit{$X$ can be considered either with the original topology or with the weak topology}. As $g^{**}(x)=g(x)$ and $h^{**}(x)=h(x)$ for all $x\in X$, the dual problem of~\eqref{dual_program} is~\eqref{DC_unconstrained_prob}.  

\begin{Theorem}\label{Toland-Singer's_thm} {\rm (Toland-Singer's duality theorem; see~\cite{Singer_1979,Singer_2006,Toland78})}
	The DC programs~\eqref{dual_program} and~\eqref{DC_unconstrained_prob} have the same optimal value.
\end{Theorem}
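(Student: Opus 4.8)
The plan is to compute the optimal value of the primal problem~\eqref{DC_unconstrained_prob} directly and show it coincides with that of the dual~\eqref{dual_program}, the only tools being the Fenchel--Moreau theorem recorded in the excerpt and an interchange of infima. Write $\alpha := \inf_{x\in X}\{g(x)-h(x)\}$ and $\beta := \inf_{x^*\in X^*}\{h^*(x^*)-g^*(x^*)\}$ for the two optimal values, so that the assertion is exactly $\alpha=\beta$.

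First I would use the Fenchel--Moreau theorem: since $h\in\Gamma_0(X)$, one has $h=h^{**}$, that is, $h(x)=\sup_{x^*\in X^*}\{\langle x^*,x\rangle-h^*(x^*)\}$ for every $x\in X$. Negating turns the supremum into an infimum, $-h(x)=\inf_{x^*}\{h^*(x^*)-\langle x^*,x\rangle\}$, whence
\begin{equation*}
	g(x)-h(x)=\inf_{x^*}\big\{g(x)+h^*(x^*)-\langle x^*,x\rangle\big\}.
\end{equation*}
Taking the infimum over $x\in X$ and interchanging the two infima (an interchange always legitimate for a doubly indexed family in $\overline{\mathbb{R}}$, since $\inf_x\inf_{x^*}=\inf_{x^*}\inf_x$), I obtain
\begin{align*}
	\alpha=\inf_x\inf_{x^*}\big\{g(x)+h^*(x^*)-\langle x^*,x\rangle\big\}
	=\inf_{x^*}\Big\{h^*(x^*)+\inf_x\big(g(x)-\langle x^*,x\rangle\big)\Big\}.
\end{align*}
The inner infimum equals $-\sup_x\{\langle x^*,x\rangle-g(x)\}=-g^*(x^*)$, so the right-hand side is $\inf_{x^*}\{h^*(x^*)-g^*(x^*)\}=\beta$, which is the desired equality.

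The main obstacle is not this algebraic skeleton but the careful treatment of extended-real arithmetic, in particular the convention $(+\infty)-(+\infty)=+\infty$ adopted in the excerpt. Both the rewriting of $g(x)-h(x)$ as an infimum over $x^*$ and the later extraction of $h^*(x^*)$ from the infimum over $x$ amount to pulling a possibly infinite constant past an $\inf$, which is harmless except in the single ambiguous combination $(+\infty)+(-\infty)$. In those exceptional points I would not factor the constant out but evaluate the infimum straight from its definition, checking that the stated convention makes both sides agree (both being $+\infty$). Because $g,h\in\Gamma_0(X)$, the conjugates $g^*,h^*$ are proper and so never take the value $-\infty$; this confines the case analysis to the points where $g$, $h$, $g^*$, or $h^*$ equals $+\infty$ and keeps it short, completing the proof of $\alpha=\beta$.
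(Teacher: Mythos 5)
Your argument is correct. Note, however, that the paper does not prove this statement at all: it is quoted as the classical Toland--Singer duality theorem and supported only by the citations to Toland and Singer, so there is no internal proof to compare against. What you supply is precisely the standard derivation from those references: write $-h(x)=\inf_{x^*}\{h^*(x^*)-\langle x^*,x\rangle\}$ via the Fenchel--Moreau equality $h=h^{**}$ (which the paper records just before the theorem), interchange the two infima, and identify $\inf_x\{g(x)-\langle x^*,x\rangle\}=-g^*(x^*)$. Your handling of the extended arithmetic is also sound: since $g,h\in\Gamma_0(X)$ force $g^*,h^*>-\infty$, the only ambiguous combinations are of the form $(+\infty)+(-\infty)$, and in each such instance a direct evaluation of the infimum gives $+\infty$ on both sides, in agreement with the paper's convention $(+\infty)-(+\infty)=+\infty$; this is exactly the case analysis needed to make the ``pull the constant out of the inf'' steps legitimate. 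So your proposal adds a short, self-contained proof of the duality between~\eqref{dual_program} and~\eqref{DC_unconstrained_prob}, whereas the paper simply imports the result; nothing in your reasoning conflicts with, or goes beyond the tools available in, the paper's setting.
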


As the program~\eqref{DC_constraint_prob} can be transformed to the one in~\eqref{DC_unconstraint_prob}, the dual problem of~\eqref{DC_constraint_prob} in the sense of Definition~\ref{Dual_DC_Programming} is the following DC program
\begin{equation}\label{dual_program_1}
	{\rm Minimize}\ \; h^*(x^*)-(g+\delta_C)^*(x^*),\ \, x^*\in X^*.
\end{equation} 

Since both functions $h$ and $g+\delta_C$ in~\eqref{DC_unconstraint_prob}  belong to $\Gamma_0 (X)$, from Theorem~\ref{Toland-Singer's_thm} we get the next result.

\begin{Corollary}\label{Cor_duality} The optimal values of the DC programs~\eqref{DC_constraint_prob} and~\eqref{dual_program_1} are equal. 
\end{Corollary}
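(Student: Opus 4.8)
The plan is to derive this directly from Toland--Singer's duality theorem (Theorem~\ref{Toland-Singer's_thm}) by exhibiting~\eqref{dual_program_1} as the dual of an unconstrained reformulation of~\eqref{DC_constraint_prob}. First I would note that~\eqref{DC_constraint_prob} and the unconstrained problem~\eqref{DC_unconstraint_prob} share the same optimal value: on $C$ the objectives $g(x)-h(x)$ and $\big(g(x)+\delta_C(x)\big)-h(x)$ coincide, while off $C$ the latter equals $+\infty$ (using the convention $(+\infty)-(+\infty)=+\infty$ in the case $x\notin\dom h$), so the infimum over $X$ of the reformulated objective equals the infimum over $C$ of $f$.

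Next I would verify that the two convex components of~\eqref{DC_unconstraint_prob}, namely $g+\delta_C$ and $h$, both lie in $\Gamma_0(X)$, which is the hypothesis under which Toland--Singer applies. That $h\in\Gamma_0(X)$ is part of the standing data. For $g+\delta_C$, convexity and lower semicontinuity follow from the corresponding properties of $g\in\Gamma_0(X)$ together with the convexity and closedness of $C$ (which make $\delta_C$ convex and lower semicontinuous); properness is where the standing assumption enters, since $\dom(g+\delta_C)=(\dom g)\cap C\neq\emptyset$ and $g+\delta_C>-\infty$ everywhere because $g$ is proper and $\delta_C\geq 0$.

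Finally, I would apply Theorem~\ref{Toland-Singer's_thm} to~\eqref{DC_unconstraint_prob} with first component $g+\delta_C$ and second component $h$. Its dual problem in the sense of Definition~\ref{Dual_DC_Programming} is obtained by interchanging the roles of the two components and passing to conjugates, which yields exactly~\eqref{dual_program_1}; the theorem then gives that~\eqref{DC_unconstraint_prob} and~\eqref{dual_program_1} have the same optimal value. Chaining this with the equality of optimal values of~\eqref{DC_constraint_prob} and~\eqref{DC_unconstraint_prob} established in the first step completes the argument. There is no real obstacle here---the assertion is a genuine corollary---and the only point requiring care is the properness of $g+\delta_C$, which is precisely why the assumption $(\dom g)\cap C\neq\emptyset$ was imposed at the outset.
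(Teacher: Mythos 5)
Your proposal is correct and follows exactly the paper's own route: rewrite~\eqref{DC_constraint_prob} as the unconstrained problem~\eqref{DC_unconstraint_prob}, observe that $h$ and $g+\delta_C$ belong to $\Gamma_0(X)$ (properness of $g+\delta_C$ coming from $({\rm dom}\,g)\cap C\neq\emptyset$), and apply Theorem~\ref{Toland-Singer's_thm} to identify~\eqref{dual_program_1} as the dual and conclude equality of optimal values. Your write-up merely makes explicit the verifications the paper leaves implicit in its one-line proof.
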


\section{Optimality Conditions}\label{sect_3}
\markboth{\sc v. t. huong, d. t. k. huyen, and n.~d.~yen}{\sc  v. t. huong, d. t. k. huyen, and n.~d.~yen}
\setcounter{equation}{0}

Consider the DC optimization problem~\eqref{DC_constraint_prob}. Recall that if $\varphi:X\to\overline{\mathbb{R}}$ is proper convex function, then the \textit{subdifferential} $\partial\varphi(\bar x)$ of $\varphi$ at a point $\bar x\in\mbox{\rm dom}\,\varphi$ is defined by setting
$$\partial\varphi(\bar x)=\{x^*\in X^*\mid \langle x^*,x-\bar x\rangle\leq\varphi(x)-\varphi(\bar x)\ \; \forall x\in X\}.$$
The \textit{normal cone} to a convex set $C\subset X$ at $\bar x\in C$ is given by
$$N_C(\bar x)=\{x^*\in X^*\mid \langle x^*,x-\bar x\rangle\leq 0\ \; \forall x\in C\}.$$

Arguing similarly as in the first part of the proof of Theorem~4.2 from~\cite{HangNTV_Yen_2016}, one can show that if $\bar x\in {\rm dom}g \cap {\rm dom}h\cap C$ is a local solution of~\eqref{DC_constraint_prob} (i.e., a local solution of~\eqref{DC_unconstraint_prob}), then the following \textit{necessary optimality condition} is satisfied:
\begin{equation}\label{DC_necc_optim_cond} \partial h(\bar x)\subset \partial \big(g+\delta_C\big)(\bar x). 
\end{equation} 

The set of the points $\bar x\in {\rm dom}g \cap {\rm dom}h\cap C$ satisfying~\eqref{DC_necc_optim_cond}, called the \textit{stationary points} of~\eqref{DC_constraint_prob}, is denoted by ${\mathcal S}_2$. In general, the fulfillment of~\eqref{DC_necc_optim_cond} is not enough for $\bar x$ to be a local minimizer of~\eqref{DC_constraint_prob}. This means that the inclusion ${\mathcal S}_1\subset {\mathcal S}_2$ can be strict (see, e.g.,~\cite[Example~2.8]{LTY_JOGO2023}).

Following Pham Dinh and Le Thi~\cite{Tao_An_AMV97,Tao_An_SIOPT98}, we call a vector $\bar x\in {\rm dom}g \cap {\rm dom}h\cap C$ a \textit{critical point} of~\eqref{DC_constraint_prob} if 
\begin{equation}\label{critical_point} \partial h(\bar x)\cap \big[\partial \big(g+\delta_C\big)(\bar x)\big]\neq\emptyset. 
\end{equation} The set of all the critical points of~\eqref{DC_constraint_prob} is denoted by ${\mathcal S}_3$.

\begin{Remark}\label{Rem1}
	{\rm If $h:X\to\overline{\mathbb{R}}$ is a proper generalized polyhedral convex function, then ${\mathcal S}_2\subset {\mathcal S}_3$. Indeed, for any point $\bar x\in {\mathcal S}_2$, we have $\bar x\in {\rm dom}\, h$. So, from~\cite[Theorems~4.10 and~4.11]{Luan_Yao_JOGO_2019} it follows that $\partial h(\bar x)$ is a nonempty generalized polyhedral convex set. Therefore,~\eqref{DC_necc_optim_cond} implies~\eqref{critical_point}. Thus, we have ${\mathcal S}_2\subset {\mathcal S}_3$.}
\end{Remark}

\begin{Remark}\label{Rem2}
	{\rm The inclusion ${\mathcal S}_2\subset {\mathcal S}_3$ can be strict even if $h:X\to\overline{\mathbb{R}}$ is a proper generalized polyhedral convex function. To justify this claim, it suffices to choose $X=\mathbb R$, $C=X$, $g(x)=0$, $h(x)=|x|$, and $\bar x=0$. Then, one has $\bar x\in {\mathcal S}_3\setminus {\mathcal S}_2$.}
\end{Remark}

The next result extends Theorem~4.2 from \cite{HangNTV_Yen_2016}, which was obtained for DC programs in finite-dimensional Euclidean spaces, to DC programs in infinite-dimensional locally convex Hausdorff topological vector spaces.  

\begin{Theorem}\label{Thm_DC_optim_cond} Let $g:X\to\overline{\mathbb {R}}$ be a proper and convex function, $h:X\to\overline{\mathbb{R}}$ a proper generalized polyhedral convex function, and $C\subset X$ a nonempty convex set. Then, a point	
\begin{equation}\label{bar_x_1st}	
	\bar x\in {\rm dom}g \cap [{\rm int}({\rm dom}h)]\cap C
\end{equation} 
is a local solution of~\eqref{DC_constraint_prob} if and only if the inclusion~\eqref{DC_necc_optim_cond} holds. Thus, the equality ${\mathcal S}_1={\mathcal S}_2$ is valid.
\end{Theorem}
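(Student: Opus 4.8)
The plan is to treat the necessity ("only if") as already in hand---it is precisely the necessary optimality condition \eqref{DC_necc_optim_cond} recalled just before the theorem---and to concentrate on the sufficiency. So I assume \eqref{DC_necc_optim_cond} holds at a point $\bar x$ satisfying \eqref{bar_x_1st}, and I aim to produce a neighborhood $U$ of $\bar x$ with $f(\bar x)\le f(x)$ for all $x\in C\cap U$. Throughout I exploit the generalized polyhedral structure of $h$: by Lemma~\ref{lem_rep_gpcf} there are $v_j^*\in X^*$ and $\beta_j\in\mathbb{R}$, $j\in J$, with $h(x)=\max_{j\in J}[\langle v_j^*,x\rangle+\beta_j]$ on ${\rm dom}\,h$, and I use the active-index notation $J(\bar x)$ of \eqref{active_indexes} together with $h_j(x)=\langle v_j^*,x\rangle+\beta_j$ as in \eqref{h}.

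First I would extract the usable consequence of the hypothesis. For every $j\in J(\bar x)$ one has $h_j(\bar x)=h(\bar x)$ and $h_j\le h$ everywhere (on ${\rm dom}\,h$ by the max-representation, and trivially off ${\rm dom}\,h$ since there $h=+\infty$), so
$$\langle v_j^*,x-\bar x\rangle = h_j(x)-h_j(\bar x)\le h(x)-h(\bar x)\quad\text{for all }x\in X,$$
i.e. $v_j^*\in\partial h(\bar x)$. Hypothesis \eqref{DC_necc_optim_cond} then gives $v_j^*\in\partial\big(g+\delta_C\big)(\bar x)$, and, since $\bar x\in C$ and $\delta_C$ vanishes on $C$, unwinding the subdifferential inequality yields
$$\langle v_j^*,x-\bar x\rangle \le g(x)-g(\bar x)\quad\text{for all }x\in C,\ j\in J(\bar x).$$

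The decisive step is a local reduction of the maximum to the active pieces, and this is exactly where $\bar x\in{\rm int}({\rm dom}\,h)$ is indispensable. On a neighborhood contained in ${\rm int}({\rm dom}\,h)$ the representation holds, so there $h$ coincides with a finite maximum of continuous affine functions and is continuous. For each $j\notin J(\bar x)$ we have $h_j(\bar x)<h(\bar x)$, whence by continuity $h_j(x)<h(x)$ near $\bar x$; intersecting over the finitely many such $j$ produces a neighborhood $U\subset{\rm int}({\rm dom}\,h)$ on which $J(x)\subset J(\bar x)$. Consequently, for $x\in U$ the maximum defining $h(x)$ is attained at some $j_0=j_0(x)\in J(\bar x)$, so that, using $h(\bar x)=h_{j_0}(\bar x)$,
$$h(x)-h(\bar x)=h_{j_0}(x)-h_{j_0}(\bar x)=\langle v_{j_0}^*,x-\bar x\rangle.$$
Combining this with the displayed inequality from the previous paragraph, I obtain $h(x)-h(\bar x)\le g(x)-g(\bar x)$, i.e. $f(\bar x)\le f(x)$, for every $x\in C\cap U$; hence $\bar x\in\mathcal{S}_1$. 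Together with the necessity this establishes the equivalence and, since both inclusions $\mathcal{S}_1\subset\mathcal{S}_2$ and $\mathcal{S}_2\subset\mathcal{S}_1$ have now been verified, the identity $\mathcal{S}_1=\mathcal{S}_2$.

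I expect the only genuinely delicate point to be the local shrinking $J(x)\subset J(\bar x)$. A boundary point of ${\rm dom}\,h$ could allow new affine pieces to become active in every neighborhood, destroying the telescoping identity for $h(x)-h(\bar x)$; interiority rules this out and also guarantees that the finite-max representation, a priori valid only on ${\rm dom}\,h$, transfers to an honest open neighborhood of $\bar x$, so that the continuity argument is legitimate. The remaining computations are the routine subdifferential manipulations indicated above.
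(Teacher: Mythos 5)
Your proposal is correct and follows the same core strategy as the paper: necessity is the recalled condition \eqref{DC_necc_optim_cond}, and sufficiency rests on exactly the same key step, namely using $\bar x\in{\rm int}({\rm dom}\,h)$ and the continuity of the finitely many affine pieces to produce a neighborhood $U\subset{\rm int}({\rm dom}\,h)$ on which $J(x)\subset J(\bar x)$. The only divergence is in the finishing step: the paper invokes the subdifferential-of-maximum formula to get $\partial h(x)={\rm co}\{v_j^*\mid j\in J(x)\}\subset\partial h(\bar x)$, picks an arbitrary $x^*\in\partial h(x)$, and adds the two subgradient inequalities, whereas you bypass that formula entirely by noting elementarily that each active $v_j^*$, $j\in J(\bar x)$, lies in $\partial h(\bar x)$ and by using the piece $v_{j_0}^*$ active at $x$ to obtain the exact identity $h(x)-h(\bar x)=\langle v_{j_0}^*,x-\bar x\rangle$; this makes your argument marginally more self-contained (no appeal to \cite[Theorem~3.59]{Mordukhovich_Nam_ConvexAnalysis_2022}) but is otherwise the same proof.
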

\begin{proof} Fix any $\bar x\in {\rm dom}g \cap [{\rm int}({\rm dom}h)]\cap C$. If $\bar x$ is a local solution of~\eqref{DC_constraint_prob} then, as shown above,~\eqref{DC_necc_optim_cond} is valid. Conversely, suppose that~\eqref{DC_necc_optim_cond} is satisfied. Since $h$ is a generalized polyhedral convex function, it admits a representation of the form~\eqref{h}. Hence, applying the formula for computing the subdifferential of the maximum of convex functions in~\cite[Theorem 3.59]{Mordukhovich_Nam_ConvexAnalysis_2022}, we have
\begin{equation}\label{subdiff_maximum}\partial h(x)={\rm co}\left(\bigcup\limits_{j\in J(x)}\partial h_j(x)\right)={\rm co}\left\{v_j^*\;\big |\; j\in J(x)\right\}\end{equation} 
for any $x\in {\rm int}({\rm dom}h)$, where $J(x)$ is defined by~\eqref{active_indexes}. In particular, it holds that \begin{equation}\label{subdiff_maximum_bar_x}\partial h(\bar x)={\rm co}\left\{v_j^*\; \big |\; j\in J(\bar x)\right\}.\end{equation}
Put $J^{c}(\bar x)=J\setminus J(\bar x)$. Then, $h_j(\bar x)<h(\bar x)$ for any $j\in J^{c}(\bar x)$. As the functions $h_j$, $j\in J^{c}(\bar x)$, and $h$ are continuous in a neighborhood of $\bar x$, there exists a neighborhood $U\subset {\rm int}({\rm dom}h)$ of $\bar x$, such that
$h_j(x)<h(x)$ for all $x\in U$ and  $j\in J^{c}(\bar x)$. Then we get $J(x)\subset J(\bar x)$ for every $x\in U$. So, the formulas~\eqref{subdiff_maximum} and~\eqref{subdiff_maximum_bar_x} imply that $\partial h(x) \subset \partial h(\bar x)$ for every $x\in U$. 

Since $J(x)\neq\emptyset$ for any $x\in {\rm dom}h$, from~\eqref{subdiff_maximum} we can deduce that $\partial h(x)\neq\emptyset$ for all $x\in U$. So, for any $x\in U$, combing the inclusion $\partial h(x) \subset \partial h(\bar x)$ with ~\eqref{DC_necc_optim_cond} yields a subgradient $x^*\in \partial h(x)$ such that $x^*\in \partial \big(g+\delta_C\big)(\bar x)$. Therefore, we have
\begin{eqnarray*}
\begin{cases}
	h(y) - h(x)\geq \langle x^*,\, y - x\rangle,\\
	g(y)+\delta_C(y) - g(\bar x)\geq \langle x^*,\, y - \bar x\rangle
\end{cases}	
\end{eqnarray*}
for all $y\in X$. Substituting $y= \bar x$ to the first inequality and 
$y= x$ to the second one, we obtain
\begin{eqnarray*}
\begin{cases}
	h(\bar x) - h(x)\geq \langle x^*,\, \bar x - x\rangle\\
	g(x)+\delta_C(x) - g(\bar x)\geq \langle x^*,\,  x - \bar x\rangle.
\end{cases}	
\end{eqnarray*} 
Adding the last inequalities side by side yields
\begin{equation*}
	[g(x)+\delta_C(x)-h(x)]-[g(\bar x)-h(\bar x)]\geq 0.
\end{equation*} It follows that 
\begin{equation*}
	[g(x)-h(x)]-[g(\bar x)-h(\bar x)]\geq 0\quad \forall x\in C\cap U.
\end{equation*} 
Thus, $\bar x$ is a local solution of~\eqref{DC_constraint_prob}. $\hfill\Box$
\end{proof}

\medskip
Specializing Theorem~\ref{Thm_DC_optim_cond} to the case where both $h$ and $g$ are generalized polyhedral convex functions, we have the following theorem.

\begin{Theorem}\label{Thm_DC_necc_optim_cond_1} Let the functions $g:X\to\overline{\mathbb {R}}$ and $h:X\to\overline{\mathbb{R}}$ be generalized polyhedral convex functions given by~\eqref{g} and~\eqref{h}, and $C\subset X$ a nonempty convex set. Then, a point	\begin{equation}\label{bar_x} \bar x\in [{\rm int}({\rm dom}g)]\cap [{\rm int}({\rm dom}h)]\cap C
\end{equation} 
is a local solution of~\eqref{DC_constraint_prob} if and only if 
\begin{equation}\label{DC_necc_optim_cond_1} {\rm co}\left\{v_j^*\; \big |\; j\in J(\bar x)\right\}\subset  {\rm co}\left\{u_i^*\; \big |\; i\in I(\bar x)\right\}+N_C(\bar x), 
\end{equation} where the active index sets $I(\bar x)$ and $J(\bar x)$ are defined by~\eqref{active_indexes}.
\end{Theorem}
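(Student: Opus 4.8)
The plan is to derive the concrete condition~\eqref{DC_necc_optim_cond_1} from the abstract optimality condition~\eqref{DC_necc_optim_cond} of Theorem~\ref{Thm_DC_optim_cond} by computing the three relevant subdifferentials explicitly. Since $g$ is in particular proper and convex, $h$ is a proper generalized polyhedral convex function, and $\bar x\in[{\rm int}({\rm dom}g)]\cap[{\rm int}({\rm dom}h)]\cap C$ (so that, a fortiori, $\bar x\in{\rm dom}g\cap[{\rm int}({\rm dom}h)]\cap C$), Theorem~\ref{Thm_DC_optim_cond} applies and asserts that $\bar x$ is a local solution of~\eqref{DC_constraint_prob} if and only if $\partial h(\bar x)\subset\partial\big(g+\delta_C\big)(\bar x)$. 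It therefore suffices to rewrite this inclusion in the desired form, and since the whole argument will be a chain of equivalences, both implications are obtained at once.

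First I would compute $\partial h(\bar x)$ and $\partial g(\bar x)$. Because $\bar x\in{\rm int}({\rm dom}h)$, formula~\eqref{subdiff_maximum_bar_x} from the proof of Theorem~\ref{Thm_DC_optim_cond} gives $\partial h(\bar x)={\rm co}\{v_j^*\mid j\in J(\bar x)\}$. The function $g$ admits the representation~\eqref{g}, so the identical argument---namely the maximum rule of \cite[Theorem~3.59]{Mordukhovich_Nam_ConvexAnalysis_2022}, valid at interior points of the effective domain---applied to $g$ at $\bar x\in{\rm int}({\rm dom}g)$ yields $\partial g(\bar x)={\rm co}\{u_i^*\mid i\in I(\bar x)\}$, where $I(\bar x)$ is the active index set from~\eqref{active_indexes}.

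The crux is to handle the sum $g+\delta_C$. Here I would invoke the Moreau--Rockafellar sum rule. Since $g$ coincides on ${\rm int}({\rm dom}g)$ with the everywhere-defined, everywhere-continuous function $\max_{i\in I}[\langle u_i^*,\cdot\rangle+\alpha_i]$, it is continuous at $\bar x$; moreover $\bar x\in C={\rm dom}\,\delta_C$. Continuity of one summand at a common point of the two effective domains is exactly the qualification condition guaranteeing $\partial\big(g+\delta_C\big)(\bar x)=\partial g(\bar x)+\partial\delta_C(\bar x)$. As $\partial\delta_C(\bar x)=N_C(\bar x)$, this gives $\partial\big(g+\delta_C\big)(\bar x)={\rm co}\{u_i^*\mid i\in I(\bar x)\}+N_C(\bar x)$. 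Substituting the three computed subdifferentials into $\partial h(\bar x)\subset\partial\big(g+\delta_C\big)(\bar x)$ produces precisely~\eqref{DC_necc_optim_cond_1}.

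I expect the main obstacle to be the rigorous justification of this sum rule in the locally convex, possibly infinite-dimensional, setting: one must ensure that the continuity of $g$ at $\bar x$ is genuine continuity in the ambient space $X$, not merely continuity relative to ${\rm dom}g$, so that the Moreau--Rockafellar theorem is legitimately applicable. This is secured precisely by the hypothesis $\bar x\in{\rm int}({\rm dom}g)$, which explains why the present theorem strengthens the domain requirement on $g$ from ${\rm dom}g$ in Theorem~\ref{Thm_DC_optim_cond} to ${\rm int}({\rm dom}g)$. Once the two maximum-rule subdifferential formulas and the sum rule are in place, the remaining substitution is routine.
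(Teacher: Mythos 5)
Your proposal is correct and follows essentially the same route as the paper: apply Theorem~\ref{Thm_DC_optim_cond}, compute $\partial h(\bar x)$ and $\partial g(\bar x)$ via the maximum rule at interior points of the domains, and use the Moreau--Rockafellar sum rule (justified by the continuity of $g$ at $\bar x\in{\rm int}({\rm dom}g)$ and $\bar x\in{\rm dom}\,\delta_C$) to get $\partial\big(g+\delta_C\big)(\bar x)=\partial g(\bar x)+N_C(\bar x)$, whence \eqref{DC_necc_optim_cond} is equivalent to \eqref{DC_necc_optim_cond_1}. Your extra remark explaining why $\bar x\in{\rm int}({\rm dom}g)$ guarantees genuine continuity of $g$ in $X$ is a sound elaboration of a step the paper states without comment.
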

\begin{proof} Let $\bar x$ be such that~\eqref{bar_x} is satisfied. Then, by Theorem~\ref{Thm_DC_optim_cond}, $\bar x$ is a local solution of~\eqref{DC_constraint_prob} if and only if \eqref{DC_necc_optim_cond} is fulfilled. On one hand, since $g$ and $h$ admit the representations~\eqref{g} and~\eqref{h}, $\bar x\in {\rm int}({\rm dom}g)$, and $\bar x\in {\rm int}({\rm dom}h)$, we have $$\partial g(\bar x)={\rm co}\left\{u_i^*\; \big |\; i\in I(\bar x)\right\},\ \; \partial h(\bar x)={\rm co}\left\{v_j^*\; \big |\; j\in J(\bar x)\right\}.$$ On the other hand, as $\bar x\in {\rm dom}\delta_C$ and $g$ is continuous at $\bar x$, 
applying the Moreau-Rockafellar theorem for subdifferentials of convex functions on locally convex Hausdorff topological vector spaces (see, e.g.,~\cite[Theorem~0.3.3, p.~47]{Ioffe_Tihomirov_1979}) yields
$$\partial \big(g+\delta_C\big)(x)=\partial g(x)+\partial\delta_C(x)=\partial g(x)+N_C(x)$$ for all $x\in X$. In particular,
$$\partial \big(g+\delta_C\big)(\bar x)=\partial g(\bar x)+\partial\delta_C(\bar x)=\partial g(\bar x)+N_C(\bar x).$$
So, the condition~\eqref{DC_necc_optim_cond} is equivalent to~\eqref{DC_necc_optim_cond_1}. Thus, we have proved that a point $\bar x$ satisfying~\eqref{bar_x} is a local solution of~\eqref{DC_constraint_prob} if and only if~\eqref{DC_necc_optim_cond_1} holds. $\hfill\Box$
\end{proof}

\medskip
Now, in addition to the assumptions of Theorem~\ref{Thm_DC_necc_optim_cond_1}, if $C$ is a generalized polyhedral convex set, then we have the next result.

	\begin{Theorem}\label{Thm_DC_optim_cond_2} Suppose that $g:X\to\overline{\mathbb {R}}$ and $h:X\to\overline{\mathbb{R}}$ are generalized polyhedral convex functions given respectively by~\eqref{g} and~\eqref{h}. If $C\subset X$ is a nonempty generalized polyhedral convex set defined by~\eqref{equivalence_gpcs}, then a point $\bar x$ satisfying~\eqref{bar_x} is a local solution of~\eqref{DC_constraint_prob} if and only if 
		\begin{equation}\label{DC_necc_optim_cond_2} {\rm co}\left\{v_j^*\; \big |\; j\in J(\bar x)\right\}\subset  {\rm co}\left\{u_i^*\; \big |\; i\in I(\bar x)\right\}+{\rm pos}\big\{x^*_k \mid k\in K(\bar x)\big\}+({\rm ker}\,A)^{\perp},
		\end{equation} where $K(\bar x)$ consists of the indexes  $k\in\{1,\ldots, p\}$ such that $\langle x^*_k,\bar x \rangle=\alpha_k$,
	$${\rm pos}\left\{x^*_k \mid k\in K(\bar x)\right\}:=\left\{\sum\limits_{k\in K(\bar x)}\lambda_k x^*_k \mid \lambda_k\geq 0\ {\rm  for\ all}\ k\in K(\bar x)\right\},$$ ${\rm ker}\,A=\{x\in X\mid Ax=0\}$ and $({\rm ker}\,A)^{\perp}=\{x^*\in X^*\mid \langle x^*,x\rangle =0 \ {\rm  for\ all}\ x\in {\rm ker}\,A\}$.
	\end{Theorem}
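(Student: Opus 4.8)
The plan is to reduce the assertion to the normal-cone computation already prepared by Theorem~\ref{Thm_DC_necc_optim_cond_1}. Since $g$, $h$, and $C$ satisfy the hypotheses of that theorem, a point $\bar x$ fulfilling~\eqref{bar_x} is a local solution of~\eqref{DC_constraint_prob} if and only if~\eqref{DC_necc_optim_cond_1} holds, namely
\begin{equation*}
{\rm co}\left\{v_j^*\mid j\in J(\bar x)\right\}\subset {\rm co}\left\{u_i^*\mid i\in I(\bar x)\right\}+N_C(\bar x).
\end{equation*}
Hence it suffices to establish the normal-cone formula
\begin{equation*}
N_C(\bar x)={\rm pos}\left\{x^*_k\mid k\in K(\bar x)\right\}+({\rm ker}\,A)^{\perp},
\end{equation*}
after which substitution into the displayed inclusion yields exactly~\eqref{DC_necc_optim_cond_2}.

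First I would split the generalized polyhedral convex set given by~\eqref{equivalence_gpcs} as $C=L\cap D$, where $L=\{x\in X\mid A(x)=y\}$ is the closed affine subspace carried by the equality constraint and $D=\{x\in X\mid \langle x^*_k,x\rangle\le\alpha_k,\ k=1,\dots,p\}$ is an ordinary convex polyhedron. Since $\bar x\in C\subset L$, one has $L=\bar x+{\rm ker}\,A$, so $L-\bar x$ is a linear subspace; a functional $\langle x^*,\cdot\rangle$ is nonpositive on $L-\bar x$ exactly when it vanishes on ${\rm ker}\,A$, whence $N_L(\bar x)=({\rm ker}\,A)^{\perp}$. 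For the polyhedral part I would verify that $N_D(\bar x)={\rm pos}\{x^*_k\mid k\in K(\bar x)\}$, with $K(\bar x)$ collecting the indices satisfying $\langle x^*_k,\bar x\rangle=\alpha_k$; the inclusion $\supset$ is immediate because each active $x^*_k$ is an outer normal at $\bar x$, while the reverse inclusion is the description of the normal cone to a convex polyhedron through its active constraint functionals.

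It then remains to combine the two normal cones via the additivity rule $N_{L\cap D}(\bar x)=N_L(\bar x)+N_D(\bar x)$. The inclusion $N_L(\bar x)+N_D(\bar x)\subset N_C(\bar x)$ is automatic for convex sets, and a direct check against~\eqref{equivalence_gpcs} confirms it: any $x^*=\sum_{k\in K(\bar x)}\lambda_k x^*_k+w^*$ with $\lambda_k\ge 0$ and $w^*\in({\rm ker}\,A)^{\perp}$ satisfies $\langle x^*,x-\bar x\rangle\le 0$ for every $x\in C$, since $x-\bar x\in{\rm ker}\,A$ annihilates $w^*$ and $\langle x^*_k,x-\bar x\rangle\le 0$ for each active $k$. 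The substantial point — and the step I expect to be the main obstacle — is the reverse inclusion $N_C(\bar x)\subset N_L(\bar x)+N_D(\bar x)$, that is, the fact that no interior-point (Slater-type) qualification is needed to split the normal cone of the intersection. In finite dimensions this is standard polyhedral calculus, but here $X$ is merely a locally convex Hausdorff topological vector space, so I would appeal to the generalized polyhedral structure of $L$ and $D$, invoking the normal-cone calculus for generalized polyhedral convex sets developed in~\cite{Luan_Yao_Yen_2018} (equivalently, a Farkas-type representation of the solution functionals of the generalized linear program $\max\{\langle x^*,x\rangle\mid x\in C\}$, which $\bar x$ solves whenever $x^*\in N_C(\bar x)$). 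Once this additivity is in force one obtains $N_C(\bar x)={\rm pos}\{x^*_k\mid k\in K(\bar x)\}+({\rm ker}\,A)^{\perp}$, and the theorem follows.
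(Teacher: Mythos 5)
Your argument is correct and follows essentially the same route as the paper: reduce to Theorem~\ref{Thm_DC_necc_optim_cond_1} and then replace $N_C(\bar x)$ by ${\rm pos}\{x^*_k \mid k\in K(\bar x)\}+({\rm ker}\,A)^{\perp}$. The paper simply quotes this normal-cone formula from \cite[Proposition~4.2]{Luan_Yao_Yen_2018}, so your decomposition $C=L\cap D$ and the additivity discussion are not needed there; since you yourself defer the one genuinely delicate step (the qualification-free reverse inclusion in infinite dimensions) to the same normal-cone calculus of \cite{Luan_Yao_Yen_2018}, the two proofs coincide in substance.
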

	\begin{proof} To specialize~\eqref{DC_necc_optim_cond_1} for the case where $C$ is of the form~\eqref{equivalence_gpcs}, we apply the formula of the normal cone in~\cite[Proposition~4.2]{Luan_Yao_Yen_2018} to get
		\begin{equation}\label{nornal_cone_gpcs}
			N_{C}(\bar x)={\rm pos}\big\{x^*_k \mid k\in K(\bar x)\big\}+({\rm ker}A)^{\perp}.
		\end{equation} By~\eqref{nornal_cone_gpcs} we can assert that, under the assumption made on $C$, the condition~\eqref{DC_necc_optim_cond_1} is equivalent to~\eqref{DC_necc_optim_cond_2}. So, the desired result follows from Theorem~\ref{Thm_DC_necc_optim_cond_1}. $\hfill\Box$
	\end{proof}

\begin{Example}\label{Example1}
	{\rm Consider the DC optimization problem~\eqref{DC_constraint_prob} with $X=\mathbb R$, $C=[-2,3]$, $g(x)=0$ for all $x\in\mathbb R$, and 
	$$h(x)=\begin{cases} -x-1 & {\rm for}\ x<-1\\
		0 & {\rm for}\ x\in [-1,1]\\
		x-1 & {\rm for}\ x>1.	
\end{cases}$$ Since $$\partial(g+\delta_C)(x)=\partial\delta_C(x)=\begin{cases} (-\infty,0] & {\rm for}\ x=-2\\
\{0\} & {\rm for}\ x\in (-2,3)\\
[0,+\infty) & {\rm for}\ x=3\\
\emptyset & {\rm otherwise}\end{cases}$$ and 
$$\partial h (x)=\begin{cases} \{-1\} & {\rm for}\ x<-1\\
	[-1,0] & {\rm for}\ x=-1\\
	\{0\}  & {\rm for}\ x\in (-1,1)\\
	[0,1] & {\rm for}\ x=1\\
	\{1\} & {\rm for}\ x>1,\end{cases}$$ the inclusion~\eqref{DC_necc_optim_cond} holds for $\bar x$ if and only if $\bar x$ belongs to the set $\{-2\}\cup (-1,1)\cup \{3\}$. Therefore, by Theorem~\ref{Thm_DC_optim_cond} we have
\begin{equation}\label{S1_2} {\mathcal S}_1={\mathcal S}_2=\{-2\}\cup (-1,1)\cup \{3\}.
\end{equation} As $C$ is compact and $g-h$ is continuous on $C$,~\eqref{DC_constraint_prob} have a solution. Comparing the values of the objective function at the points of ${\mathcal S}_1$ yields ${\mathcal S}=\{3\}$. It is easy to see that~\eqref{critical_point} is satisfied if and only if $\bar x$ belongs to the set $\{-2\}\cup [-1,1]\cup \{3\}$. So, we have 
${\mathcal S}_3=\{-2\}\cup [-1,1]\cup \{3\}.$}
\end{Example}
\begin{figure}[htp]
	\begin{center}
		\includegraphics[height=8.5cm,width=12cm]{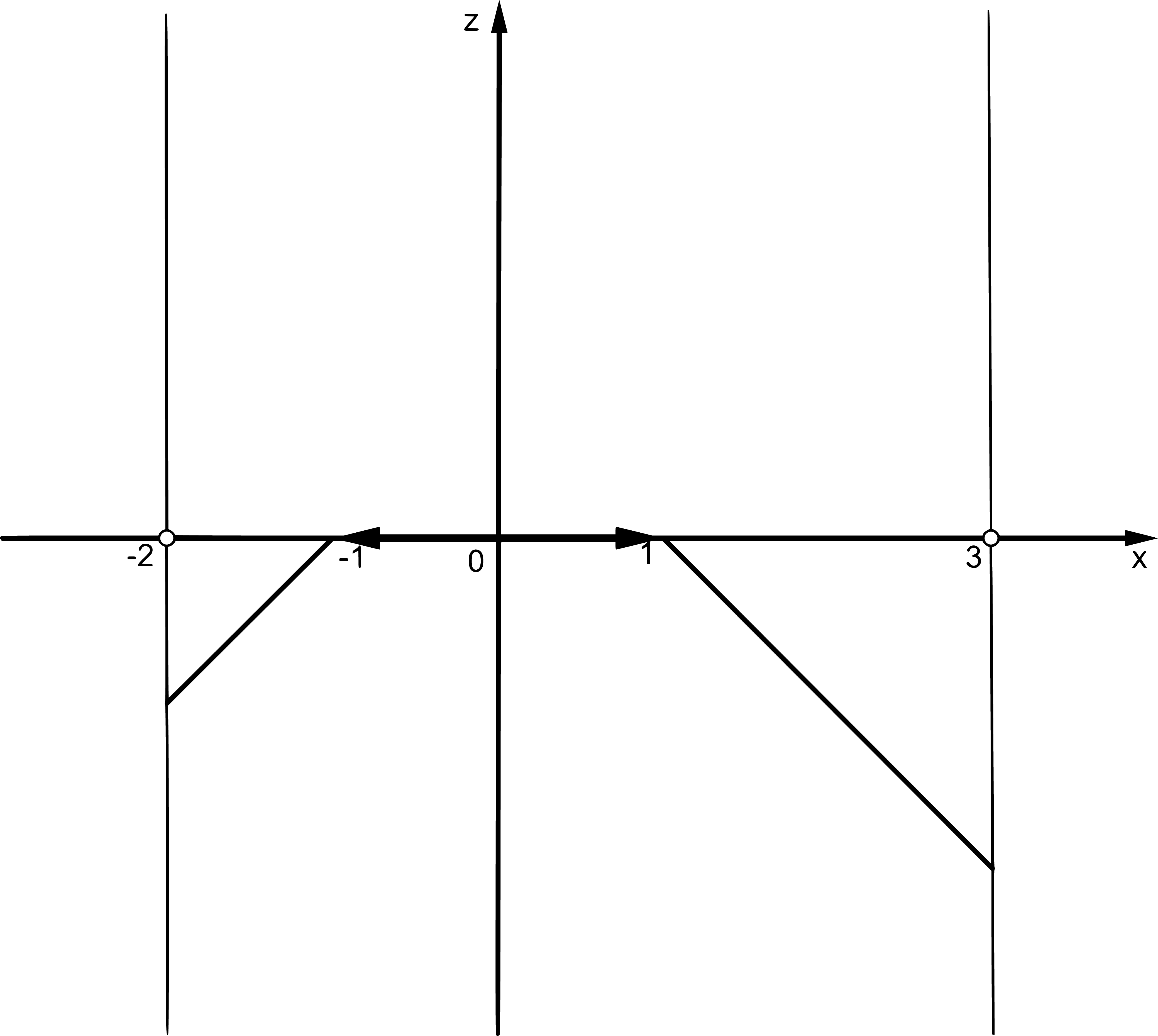}
	\end{center}
	\caption{The sets $\mathcal{S}$, $\mathcal{S}_1$, $\mathcal{S}_2$, and $\mathcal{S}_3$ in Example~\ref{Example1}}\label{Fig_1}
\end{figure}

\section{Local Solution Sets and Global Solution Sets}\label{sect_4}
\markboth{\sc v. t. huong, d. t. k. huyen, and n.~d.~yen}{\sc  v. t. huong, d. t. k. huyen, and n.~d.~yen}
\setcounter{equation}{0}

Based on Theorem~\ref{Thm_DC_optim_cond}, we can obtain our first result on the structure of the local solution set of the DC program~\eqref{DC_constraint_prob}. 

\begin{Theorem}\label{thm_structure1} Let $g:X\to\overline{\mathbb {R}}$ be a lower semicontinuous, proper, convex function, $h:X\to\overline{\mathbb{R}}$ a proper generalized polyhedral convex function, and $C\subset X$ a nonempty closed convex set. If both sets ${\rm dom}g$ and ${\rm int}({\rm dom}h)$ contain $C$, then the local solution set ${\mathcal S}_1$ of~\eqref{DC_constraint_prob} is the union of finitely many convex subsets of $C$, while the global solution set ${\mathcal S}$ of the problem is the union of finitely many closed convex subsets of $C$. 
\end{Theorem}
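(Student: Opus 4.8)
The plan is to decompose each solution set according to the finitely many possible active index sets $J(x)\subset J$ of the polyhedral component $h$, and to show that every resulting piece is convex (for $\mathcal S_1$), respectively closed and convex (for $\mathcal S$). The starting point is that, since $C\subset {\rm dom}\,g\cap[{\rm int}({\rm dom}\,h)]$, every $\bar x\in C$ meets the hypotheses of Theorem~\ref{Thm_DC_optim_cond}, so $\mathcal S_1=\mathcal S_2=\{x\in C\mid \partial h(x)\subset\partial(g+\delta_C)(x)\}$. Writing $h$ as in~\eqref{h} and invoking the formula~\eqref{subdiff_maximum}, namely $\partial h(x)={\rm co}\{v_j^*\mid j\in J(x)\}$, together with the convexity of $\partial(g+\delta_C)(x)$, I would reduce the membership $x\in\mathcal S_1$ to the finitely many conditions $v_j^*\in\partial(g+\delta_C)(x)$ for all $j\in J(x)$.

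The first key step is to analyse, for each $j\in J$, the set $A_j:=\{x\in C\mid v_j^*\in\partial(g+\delta_C)(x)\}$. Unwinding the subdifferential inequality and using $C\subset{\rm dom}\,g$, one checks that $A_j$ is precisely the set of minimizers over $C$ of the lower semicontinuous convex function $y\mapsto g(y)-\langle v_j^*,y\rangle$; hence $A_j$ is convex, and moreover closed (it is a sublevel set of that function intersected with the closed set $C$). For each nonempty $J'\subset J$ I then put $D_{J'}:=\{x\in C\mid J(x)=J'\}\cap\bigcap_{j\in J'}A_j$, so that $\mathcal S_1=\bigcup_{\emptyset\neq J'\subset J}D_{J'}$, a union of at most $2^{|J|}-1$ pieces.

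The crux of the first assertion is the convexity of $D_{J'}$, and this rests on the observation that the active set is constant along open segments of a level set. Indeed, if $x_1,x_2$ both satisfy $J(x_i)=J'$, then for $x_\lambda=\lambda x_1+(1-\lambda)x_2$ with $\lambda\in(0,1)$ each affine function $h_j$, $j\in J'$, attains the common value $\lambda h(x_1)+(1-\lambda)h(x_2)$, while every $h_i$ with $i\notin J'$ stays strictly below it; hence $J(x_\lambda)=J'$. Combined with the convexity of each $A_j$, this yields $x_\lambda\in D_{J'}$, so $D_{J'}$ is convex and the claim about $\mathcal S_1$ follows.

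For the global solution set I would argue along the same lines but replace the (generally non-closed) set $\{x\in C\mid J(x)=J'\}$ by the larger closed convex set $P_{J'}:=\{x\in C\mid h_j(x)=h(x)\ \forall j\in J'\}=\{x\in C\mid J(x)\supseteq J'\}$, noting $\mathcal S\subset C=\bigcup_{J'}P_{J'}$. On $P_{J'}$ the function $h$ coincides with a single affine function $h_{j_0}$, $j_0\in J'$, so $f=g-h$ is lower semicontinuous and convex on the closed convex set $P_{J'}$. Assuming $\mathcal S\neq\emptyset$ (otherwise the claim is vacuous) and setting $f^*=\min_C f$, the inequality $f\geq f^*$ on $C$ gives $\mathcal S\cap P_{J'}=\{x\in P_{J'}\mid f(x)\leq f^*\}$, a sublevel set of a lower semicontinuous convex function on a closed convex set, hence closed and convex; therefore $\mathcal S=\bigcup_{\emptyset\neq J'\subset J}(\mathcal S\cap P_{J'})$ is a finite union of closed convex sets. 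The main obstacle is exactly this point of closedness, which forces the two cases to be treated differently: enlarging the exact-active-set pieces to the closed supersets $P_{J'}$ is legitimate for $\mathcal S$ because global optimality is the absolute condition $f=f^*$ and does not require the active set to equal $J'$, whereas for $\mathcal S_1$ the same enlargement would erroneously absorb stationary points that fail to be local minimizers.
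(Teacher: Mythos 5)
Your proof is correct. For the local solution set your route essentially coincides with the paper's: both pass through Theorem~\ref{Thm_DC_optim_cond} (so that ${\mathcal S}_1={\mathcal S}_2$ on $C$), the formula $\partial h(x)={\rm co}\{v_j^*\mid j\in J(x)\}$ from~\eqref{subdiff_maximum}, and the reduction of~\eqref{DC_necc_optim_cond} to the finitely many conditions $v_j^*\in\partial(g+\delta_C)(x)$ for $j\in J(x)$, each of which describes the (closed convex) solution set of a convex program; the only difference is that you stratify $C$ by the exact active set $\{x\in C\mid J(x)=J'\}$, whereas the paper uses the larger, relatively open pieces $C_{J_1}=\{x\in C\mid J(x)\subset J_1\}$ --- both are convex by the same computation with the affine functions $h_j$, and the paper's choice is made with an eye to Theorem~\ref{thm_structure2}, where relative openness of $C_{J_1}$ is what produces semi-closed generalized polyhedral pieces. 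For the global solution set your argument genuinely differs: the paper follows Pham Dinh--Le Thi and interchanges the infimum with the minimum over $j$, obtaining $\inf_C(g-h)=\min_{j\in J}\inf_C\big(g-\langle v_j^*,\cdot\rangle-\beta_j\big)$ and hence ${\mathcal S}=\bigcup_{j\in J_*}{\mathcal S}^j$, a union of at most $|J|$ solution sets of convex subproblems, while you cover $C$ by the closed convex sets $P_{J'}=\{x\in C\mid J(x)\supseteq J'\}$ on which $h$ is affine and cut each with the sublevel set $\{f\le f^*\}$, getting at most $2^{|J|}-1$ closed convex pieces. Both are valid; the paper's version is more economical and identifies each piece explicitly as the solution set of a convex program (which is what makes the generalized polyhedral refinement in Theorem~\ref{thm_structure2} immediate via \cite[Proposition~3.9]{Luan_Yao_JOGO_2019}), whereas yours is a more geometric subdivision argument requiring no manipulation of the value function. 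Your closing observation --- that enlarging the exact-active-set pieces to $P_{J'}$ is legitimate for ${\mathcal S}$ but would be wrong for ${\mathcal S}_1$ --- is accurate.
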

\begin{proof} Suppose that $h$ is given by~\eqref{h}. Since ${\rm dom}g\supset C$ and  $[{\rm int}({\rm dom}h)]\supset C$ by our assumptions, we have ${\rm dom}g \cap [{\rm int}({\rm dom}h)]\cap C=C$. Therefore, condition~\eqref{bar_x_1st} is satisfied for every point $\bar x\in C$. So, by Theorem~\ref{Thm_DC_optim_cond}, a point $\bar x\in C$ belongs to ${\mathcal S}_1$ if and only if $\bar x\in {\mathcal S}_2$, i.e., the inclusion~\eqref{DC_necc_optim_cond} holds. Since every point of $C$ belongs to ${\rm int}({\rm dom}h)$, the formula for computing the subdifferential of the maximum of convex functions in~\cite[Theorem~3.59]{Mordukhovich_Nam_ConvexAnalysis_2022} gives \begin{equation}\label{subdiff_h} \partial h(\bar x)={\rm co}\left\{v_j^*\; \big |\; j\in J(\bar x)\right\}\quad \forall\bar x\in C.
\end{equation}  	
By~\eqref{subdiff_h} and the convexity  the set on the right-hand side of~\eqref{DC_necc_optim_cond},  we can rewrite the latter equivalently as
\begin{equation*}\label{v*j} v_j^*\in \partial \big(g+\delta_C\big)(\bar x)\quad \forall j\in J(\bar x). 
\end{equation*}

{\sc Claim 1}. \textit{Suppose that $x\in C$. Then $x$ belongs ${\mathcal S}_1$ if and only if there exists a nonempty subset $J_1\subset J$ such that 
\begin{equation}\label{incl_new} J(x)\subset J_1
\end{equation}
and
\begin{equation}\label{v*j} v_j^*\in \partial \big(g+\delta_C\big)(x)\quad \forall j\in J_1. 
\end{equation}}

Indeed, if $x\in {\mathcal S}_1$, then by choosing $J_1=J(x)$ and using the above-recalled optimality condition we have~\eqref{incl_new} and~\eqref{v*j}. Conversely, if~\eqref{incl_new} and~\eqref{v*j} are fulfilled, then we have 
$${\rm co}\left\{v_j^*\; \big |\; j\in J(x)\right\}\subset \partial \big(g+\delta_C\big)(x).$$ Combining this with~\eqref{subdiff_h} yields 
$$ \partial h(x)\subset \partial \big(g+\delta_C\big)(x).$$ So, we have $x\in {\mathcal S}_2$. Now, applying Theorem~\ref{Thm_DC_optim_cond}, we get $x\in {\mathcal S}_1$.

{\sc Claim 2}. \textit{For any nonempty subset $J_1\subset J$, the set $C_{J_1}$ of all $x\in C$ satisfying~\eqref{incl_new} is a convex subset of $C$, which is open in the relative topology (see~{\rm \cite[p.~51]{Kelley_1975}}) of $C$.}

Indeed, if $C_{J_1}=\emptyset$, then we are done. Now, suppose that $C_{J_1}\neq\emptyset$ and let $x\in C_{J_1}$ be given arbitrarily. As it has been shown in the first part of the proof of Theorem~\ref{Thm_DC_optim_cond}, by~\eqref{incl_new} we can find a neighborhood $U\subset {\rm int}({\rm dom}h)$ of $x$, such that
$h_j(x')<h(x')$ for all $x'\in U$ and $j\in J^{c}(x):=J\setminus J(x)$. This implies that $J(x')\subset J(x)\subset J_1$ for every $x'\in U$. Therefore, $U\cap C\subset C_{J_1}$. We have thus shown that $C_{J_1}$ is open in the relative topology of $C$. To prove the convexity of $C_{J_1}$, take any $x^1, x^2\in C_{J_1}$ and $t\in (0,1)$. Since $J\big(x^1\big)\subset J_1$ and $J\big(x^2\big)\subset J_1$, for every $j'\in J\setminus J_1$ one has $j'\notin J\big(x^1\big)$ and $j'\notin J\big(x^2\big)$. So, the strict inequalities
$$\langle v_{j'}^*, x^1 \rangle + \beta_{j'}<\max\limits_{j\in J_1} \big[\langle v_j^*,x^1 \rangle + \beta_j]$$
and
$$\big[\langle v_{j'}^*, x^2 \rangle + \beta_{j'}<\max\limits_{j\in J_1} \big[\langle v_j^*, x^2 \rangle + \beta_j]$$ hold. Multiplying the first inequality by $(1-t)$, the second one by $t$, and adding the resulted inequalities yield
$$\langle v_{j'}^*,(1-t)x^1+tx^2\rangle + \beta_{j'}<\max\limits_{j\in J_1} \big[\langle v_j^*, (1-t)x^1+tx^2\ \rangle + \beta_j].$$ It follows that $j'\notin J\big((1-t)x^1+tx^2\big)$. As this is valid for every $j'\in J\setminus J_1$, we must have $J\big((1-t)x^1+tx^2\big)\subset J_1$.
Therefore, $(1-t)x^1+tx^2\in C_{J_1}$.

The inclusion in~\eqref{v*j} can be rewritten equivalently as 
\begin{equation}\label{inclusion} 0\in \partial \big(g+\delta_C-v_j^*\big)(x).\end{equation}
Since the function $\varphi_j:X\to\overline{\mathbb {R}}$ defined by $$\varphi_j(x):=g(x)+\delta_C(x)-v_j^*(x),\quad x\in X,$$ is convex, 
by the a generalized version of the Fermat stationary rule
for convex extended-real-valued functions in~\cite[Proposition~3.29]{Mordukhovich_Nam_ConvexAnalysis_2022} we know that the set of $x\in X$ satisfying the inclusion~\eqref{inclusion} coincides with the solution set of the convex optimization problem
\begin{equation*}\label{optim_j}
	\min\{\varphi_j(x)\mid x\in X\},
\end{equation*} 
which is denoted by $\Omega_j$. As $\varphi_j$ is a lower semicontinuous function, $\Omega_j$ is convex and closed. Noting that the set of $x\in X$ satisfying~\eqref{v*j} is $\bigcap\limits_{j\in J_1}\Omega_j$ and the set $C_{J_1}$ of all $x\in C$ satisfying~\eqref{incl_new} is convex and open in the relative topology of $C$ by Claim~2, we can assert that the set of $x\in X$ fulfilling both conditions~\eqref{incl_new} and~\eqref{v*j}, denoted by ${\mathcal S}_1(J_1)$, is a convex subset of $C$. By Claim~1 we have
\begin{equation}\label{union}{\mathcal S}_1=\bigcup\Big\{{\mathcal S}_1(J_1)\mid J_1\subset J,\ J_1\neq\emptyset\Big\}.\end{equation}
Hence, the local solution set of~\eqref{DC_constraint_prob} is the union of finitely many convex subsets of~$C$. 

To prove the second assertion of the theorem, we can argue similarly as in~\cite{Tao_An_AMV97}. Namely, by~\eqref{h} and the assumption ${\rm int}({\rm dom}h)\supset C$ we have
$$\begin{array}{rl}
	\inf\limits_{x\in X} \left[\big(g(x)+\delta_C(x)\big)-h(x)\right] &= \inf\limits_{x\in X} \left[\big(g(x)+\delta_C(x)\big)-\max\limits_{j\in J} \big(\langle v_j^*, x \rangle + \beta_j\big)\right]\\ 
	&= \inf\limits_{x\in X} \left[\big(g(x)+\delta_C(x)\big)+\min\limits_{j\in J}	 \big(-\langle v_j^*, x \rangle - \beta_j\big)\right]\\
	&= \inf\limits_{x\in X}\min\limits_{j\in J}\left[\big(g(x)+\delta_C(x)\big)-\langle v_j^*, x \rangle - \beta_j\right]\\ 
	&= \min\limits_{j\in J}\inf\limits_{x\in X} \left[\big(g(x)+\delta_C(x)\big)-\langle v_j^*, x \rangle - \beta_j\right].
\end{array}$$ Therefore, denoting by $\alpha_j$ (resp., $\mathcal{S}^j$) the optimal value (resp., the solution set) of the convex optimization problem
\begin{equation}\label{optim_j}	{\rm Minimize}\ \, \big(g(x)+\delta_C(x)\big)-\langle v_j^*, x \rangle - \beta_j,\ \; x\in X,
\end{equation} and letting $\bar\alpha$ stand for the optimal value of~\eqref{DC_unconstraint_prob}, we get $\bar\alpha=\min\limits_{j\in J}\alpha_j$. Hence, the representation ${\mathcal S}=\bigcup\limits_{j\in J_*}\mathcal{S}^j$, where $J_*:=\{j\in J\mid \alpha_j=\bar\alpha\}$, is valid. Clearly, the closedness of $C$, the lower semicontinuity of $g$ and $h$, and the assumptions ${\rm dom}g\supset C$ and ${\rm int}({\rm dom}h)\supset C$ imply that $\mathcal{S}^j$ is a closed subset of $C$ for each $j\in J$. So, we see that ${\mathcal S}$ is the union of finitely many closed convex subsets of $C$.  
$\hfill\Box$
\end{proof}

\medskip We call the intersection of a generalized polydedral convex subset of $C$ and convex subset of $C$, which is open in the relative topology of $C$, a \textit{semi-closed generalized polydedral convex subset} of $C$.

\medskip
Our second result on the structure of the local solution set of the DC program~\eqref{DC_constraint_prob} is stated as follows.

\begin{Theorem}\label{thm_structure2} Let $g,h:X\to\overline{\mathbb {R}}$ be generalized polyhedral convex functions and let $C\subset X$ be a nonempty generalized polyhedral convex set. If the conditions ${\rm dom}g\supset C$ and~${\rm int}({\rm dom}h)\supset C$ are satisfied, then the local solution set of~\eqref{DC_constraint_prob}, now denoted by~${\mathcal S}_{1a}$,  is the union of finitely many semi-closed generalized polydedral convex subsets of $C$, while and global solution set~${\mathcal S}$ of the problem is the union of finitely many generalized polyhedral convex sets. 
\end{Theorem}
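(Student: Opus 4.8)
The plan is to revisit the proof of Theorem~\ref{thm_structure1} and sharpen both of its conclusions by exploiting the stronger hypotheses that $g$ is generalized polyhedral convex and that $C$ is a generalized polyhedral convex set. Note first that the local solution set $\mathcal{S}_{1a}$ is the same set as the set $\mathcal{S}_1$ studied in Theorem~\ref{thm_structure1}, since the underlying problem~\eqref{DC_constraint_prob} is unchanged. Recall that the proof of Theorem~\ref{thm_structure1} produced the representation $\mathcal{S}_{1a}=\bigcup\{\mathcal{S}_1(J_1)\mid J_1\subset J,\ J_1\neq\emptyset\}$, where $\mathcal{S}_1(J_1)=C_{J_1}\cap\bigcap_{j\in J_1}\Omega_j$, the set $C_{J_1}=\{x\in C\mid J(x)\subset J_1\}$ being convex and open in the relative topology of $C$ by Claim~2, and $\Omega_j$ being the solution set of the convex program of minimizing $\varphi_j(x)=g(x)+\delta_C(x)-\langle v_j^*,x\rangle$ over $X$. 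Since the relatively open convex factor $C_{J_1}$ is already understood, the whole matter reduces to showing that each $\Omega_j$ is a generalized polyhedral convex subset of $C$.

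First I would verify that $\varphi_j$ is itself a generalized polyhedral convex function, and I would do this directly through Lemma~\ref{lem_rep_gpcf} rather than through a calculus of sums. Because ${\rm dom}\,g\supset C$, the effective domain ${\rm dom}\,\varphi_j=({\rm dom}\,g)\cap C=C$ is a nonempty generalized polyhedral convex set, so $\varphi_j$ is proper. Moreover, for every $x\in C$ the representation~\eqref{g} of $g$ gives $\varphi_j(x)=g(x)-\langle v_j^*,x\rangle=\max_{i\in I}\big[\langle u_i^*-v_j^*,x\rangle+\alpha_i\big]$, the maximum of finitely many continuous affine functions. By Lemma~\ref{lem_rep_gpcf}, these two facts together mean precisely that $\varphi_j$ is generalized polyhedral convex.

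Next I would identify $\Omega_j$. Let $\bar\alpha_j:=\inf_{x\in X}\varphi_j(x)\in[-\infty,+\infty)$. If $\bar\alpha_j=-\infty$, then $\Omega_j=\emptyset$, which is generalized polyhedral convex by convention. If $\bar\alpha_j$ is finite, then by the solution existence theory for generalized polyhedral convex programs in~\cite{Luan_Yao_JOGO_2019} the infimum is attained and $\Omega_j=\{x\in C\mid \varphi_j(x)\leq\bar\alpha_j\}=\{x\in C\mid \langle u_i^*-v_j^*,x\rangle+\alpha_i\leq\bar\alpha_j\ \text{for all}\ i\in I\}$, which is the intersection of the generalized polyhedral convex set $C$ with finitely many closed half-spaces and hence is generalized polyhedral convex. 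In either case $\Omega_j\subset C$, since $\delta_C(x)=+\infty$ off $C$. Consequently $\bigcap_{j\in J_1}\Omega_j$, a finite intersection of generalized polyhedral convex subsets of $C$, is again such a set, and $\mathcal{S}_1(J_1)=C_{J_1}\cap\bigcap_{j\in J_1}\Omega_j$ is exactly the intersection of a generalized polyhedral convex subset of $C$ with a relatively open convex subset of $C$, i.e. a semi-closed generalized polyhedral convex subset of $C$. Taking the finite union over nonempty $J_1\subset J$ yields the first assertion.

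For the global solution set I would reuse the representation $\mathcal{S}=\bigcup_{j\in J_*}\mathcal{S}^j$ established at the end of the proof of Theorem~\ref{thm_structure1}, where $J_*=\{j\in J\mid\alpha_j=\bar\alpha\}$ and $\mathcal{S}^j$ is the solution set of~\eqref{optim_j}. The objective of~\eqref{optim_j} differs from $\varphi_j$ only by the additive constant $-\beta_j$, so $\mathcal{S}^j=\Omega_j$, which has just been shown to be generalized polyhedral convex; hence $\mathcal{S}$ is the union of finitely many generalized polyhedral convex sets. I expect the one point requiring genuine care to be the passage to the optimal sublevel set: one must confirm, using Lemma~\ref{lem_rep_gpcf} together with the attainment result of~\cite{Luan_Yao_JOGO_2019}, that the minimizer set of the generalized polyhedral convex function $\varphi_j$ is cut out of $C$ by finitely many affine inequalities. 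Once this is secured, the assembly of the two finite unions is routine.
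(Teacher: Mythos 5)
Your proposal is correct and reproduces the paper's overall strategy: you inherit from the proof of Theorem~\ref{thm_structure1} the decomposition ${\mathcal S}_{1a}=\bigcup\{{\mathcal S}_1(J_1)\mid \emptyset\neq J_1\subset J\}$ with ${\mathcal S}_1(J_1)=C_{J_1}\cap\bigcap_{j\in J_1}\Omega_j$, and the representation ${\mathcal S}=\bigcup_{j\in J_*}{\mathcal S}^j$ with ${\mathcal S}^j=\Omega_j$, so the whole theorem reduces, exactly as in the paper, to showing that each $\Omega_j$ is a generalized polyhedral convex set. Where you genuinely diverge is in how that key step is established. The paper cites calculus results: the sum rule \cite[Theorem~3.7]{Luan_Yao_Yen_2018} to get that $\varphi_j=g+\delta_C-v_j^*$ is generalized polyhedral convex, and \cite[Proposition~3.9]{Luan_Yao_JOGO_2019} to get that the solution set of a generalized polyhedral convex program is a generalized polyhedral convex set. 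You instead verify both facts by hand: since ${\rm dom}\,\varphi_j=({\rm dom}\,g)\cap C=C$ and $\varphi_j(x)=\max_{i\in I}\big[\langle u_i^*-v_j^*,x\rangle+\alpha_i\big]$ on $C$, Lemma~\ref{lem_rep_gpcf} gives the generalized polyhedral convexity of $\varphi_j$ directly, and the minimizer set is the optimal sublevel set $\{x\in C\mid \langle u_i^*-v_j^*,x\rangle+\alpha_i\leq\bar\alpha_j\ \forall i\in I\}$, i.e.\ the generalized polyhedral convex set $C$ intersected with finitely many closed half-spaces, hence generalized polyhedral convex by Definition~\ref{Def_gpcs}. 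This buys a more self-contained, elementary argument that exposes the affine inequalities cutting out $\Omega_j$; the paper's route is shorter and does not require the explicit max-representation of $\varphi_j$. One minor remark: your appeal to the solution-existence theory of \cite{Luan_Yao_JOGO_2019} for attainment of $\bar\alpha_j$ is not needed --- when $\bar\alpha_j$ is finite the set $\{x\in C\mid\varphi_j(x)\leq\bar\alpha_j\}$ equals $\Omega_j$ whether or not it is nonempty, and it is generalized polyhedral convex in any case --- so this is an extra precaution rather than a gap.
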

\begin{proof} By the conditions made on $g$, $h$,and $C$, all the assumptions of Theorem~\ref{thm_structure1} are satisfied. Therefore, since the local solution set ${\mathcal S}_{1a}$ is a special case of the local solution set ${\mathcal S}_1$ of ~\eqref{DC_constraint_prob}, we can repeat the preceding proof and get from~\eqref{union} that \begin{equation}\label{union2}{\mathcal S}_{1a}=\bigcup\Big\{{\mathcal S}_1(J_1)\mid J_1\subset J,\ J_1\neq\emptyset\Big\}\end{equation}
with ${\mathcal S}_1(J_1)$ being the set of $x\in X$ fulfilling both conditions~\eqref{incl_new} and~\eqref{v*j}.

Let $\varphi_j$ and $\Omega_j$ be defined as in the proof of Theorem~\ref{thm_structure1}. By our assumptions, the functions $g$, $\delta_C$, and $(-v_j^*)$ are generalized polyhedral convex, $${\rm dom}g\cap {\rm dom}\delta_C\cap {\rm dom}(-v_j^*)= C$$ and $C$ is nonempty, applying Theorem~3.7 from~\cite{Luan_Yao_Yen_2018} implies that $\varphi_j$ is a generalized polyhedral convex function. Therefore, by~\cite[Proposition~3.9]{Luan_Yao_JOGO_2019}, $\Omega_j$ is a  generalized polyhedral convex set. It follows that $\bigcap\limits_{j\in J_1}\Omega_j$ is a generalized polyhedral convex set. Since the set of $x\in X$ satisfying~\eqref{v*j} is $\bigcap\limits_{j\in J_1}\Omega_j$ and the set $C_{J_1}$ of all $x\in C$ satisfying~\eqref{incl_new} is convex and open in the relative topology of $C$ (see Claim~2 in the above proof), we have shown that ${\mathcal S}_1(J_1)$ is a semi-closed generalized polyhedral convex subset of $C$ for each nonempty subset $J_1\subset J$. Thus, the first assertion of the theorem follows from~\eqref{union2}.

To prove the second assertion of the theorem, it suffices to reapply the arguments of the second part of the proof of Theorem~\ref{thm_structure1} and observe that our assumptions  assure that, for each $j\in J$, the optimization problem in~\eqref{optim_j} is generalized polyhedral convex; hence~$\mathcal{S}^j$ is a generalized polyhedral convex set by~\cite[Proposition~3.9]{Luan_Yao_JOGO_2019}. $\hfill\Box$
\end{proof}

 Recall (see~\cite[p.~53]{Kelley_1975} for an equivalent definition) that a topological space $Z$ is called \textit{connected} if it cannot be represented as the union of two disjoint nonempty open sets. Since the complement of an open set is a closed set, a topological space is connected if and only if it cannot be represented as the union of two disjoint nonempty closed sets. A subset $Y$ of $Z$ is said to be connected if the topological space $Y$ with the \textit{relative topology} (see~\cite[p.~51]{Kelley_1975}) is connected. Thus, $Y$ is connected if and only if there are no open sets $U$ and $V$ of $Z$ such that $U\cap Y\neq\emptyset$, $V\cap Y\neq\emptyset$, and $Y=(U\cap Y)\cup (V\cap Y)$. A \textit{connected component} (called a \textit{component} in~\cite[p.~54]{Kelley_1975}) of a topological space is a maximal connected subset; that is, a connected subset which is properly contained in no other  connected subset.

\medskip
To proceed furthermore, we need the following useful lemmas, which might be new. 

\begin{Lemma}\label{Lem2}
	Suppose that $D$ is a subset of a topological vector space $Z$ and $C$ is a convex subset of $D$. Then, the closure of $C$ in the relative topology of $D$, denoted by $\widehat C$, is convex.
\end{Lemma}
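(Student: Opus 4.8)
The plan is to reduce the relative closure $\widehat C$ to an ordinary closure in the ambient space $Z$ and then to invoke the classical fact that, in a topological vector space, the closure of a convex set is again convex. Throughout, let $\overline{C}$ denote the closure of $C$ in $Z$.

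First I would establish the identity $\widehat C=\overline{C}\cap D$, which is the standard description of closure in a subspace topology; the only point deserving attention is the hypothesis $C\subset D$. A point $d\in D$ belongs to $\widehat C$ exactly when every relatively open neighborhood of $d$ meets $C$. Since the relatively open neighborhoods of $d$ are precisely the sets $U\cap D$ with $U$ open in $Z$ and $d\in U$, and since $C\subset D$ forces $(U\cap D)\cap C=U\cap C$, this happens if and only if $U\cap C\neq\emptyset$ for every open $U\ni d$, that is, if and only if $d\in\overline{C}$. Intersecting with the requirement $d\in D$ yields the displayed identity.

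Next I would recall why $\overline{C}$ is convex. For a fixed $t\in[0,1]$ the map $\sigma_t(x,y)=(1-t)x+ty$ is continuous from $Z\times Z$ into $Z$; convexity of $C$ gives $\sigma_t(C\times C)\subset C$, and continuity together with $\overline{C\times C}=\overline{C}\times\overline{C}$ yields
\[
\sigma_t\big(\overline{C}\times\overline{C}\big)\subset\overline{\sigma_t(C\times C)}\subset\overline{C}.
\]
As $t\in[0,1]$ was arbitrary, $\overline{C}$ is convex.

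It then remains to combine the two steps: given $x_1,x_2\in\widehat C$ and $t\in[0,1]$, both points lie in $\overline{C}$, so $(1-t)x_1+tx_2\in\overline{C}$ by convexity of $\overline{C}$, while both points also lie in $D$. The step I expect to be the genuine obstacle is confirming that this convex combination returns to $D$, so that it falls in $\overline{C}\cap D=\widehat C$. The reduction above shows this is the only thing left to check, and it is exactly here that some property of $D$ must be used: convexity of $\overline{C}$ controls membership in the closure but says nothing about membership in $D$. In the settings where this lemma is invoked $D$ is itself convex, and then $\widehat C=\overline{C}\cap D$ is the intersection of the two convex sets $\overline{C}$ and $D$, which settles the matter at once.
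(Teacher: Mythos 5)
Your reduction is sound as far as it goes: the identity $\widehat C=\overline C\cap D$ (which indeed uses $C\subset D$) and the convexity of the closure $\overline C$ in a topological vector space are both correct, and you have put your finger on the one step that the stated hypotheses cannot supply: nothing forces a convex combination of two points of $\widehat C$ to lie in $D$. Your way out --- ``in the settings where this lemma is invoked $D$ is itself convex'' --- is not available here, however: in the paper, Lemma~\ref{Lem2} is applied inside the proof of Lemma~\ref{Lem3} with $D=\bigcup_{k=1}^m C_k$ a finite union of convex sets, which in general is not convex. So, as written, your argument proves only the variant of the lemma in which $D$ is additionally assumed convex (or in which $\widehat C$ is replaced by the closure of $C$ in $Z$), not the lemma as stated, and its closing sentence misdescribes how the lemma is actually used.

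The comparison with the paper is nevertheless instructive, because the paper's own proof contains exactly the gap you isolated. It fixes $z=(1-t)z^1+tz^2$, uses continuity of $(x,y)\mapsto(1-t)x+ty$ to find $u^1\in U_1\cap C$ and $u^2\in U_2\cap C$ with $(1-t)u^1+tu^2\in U\cap C$ for an arbitrary open $U\ni z$, and then concludes $z\in\widehat C$; this shows only $z\in\overline C$, and the membership $z\in D$, which is part of the definition of the relative closure, is never verified. In fact the statement is false as it stands: in $Z=\mathbb R^2$ take $C=(0,1)^2\cup\{(0,0)\}$, which is convex, and $D=C\cup\{(0,1)\}$; then $\overline C=[0,1]^2\supset D$, so $\widehat C=\overline C\cap D=D$, yet $(0,0)$ and $(0,1)$ lie in $\widehat C$ while their midpoint $(0,\tfrac12)$ does not lie in $D$. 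Note that this $D$ is itself a union of two convex sets, so the failure persists precisely in the situation of Lemma~\ref{Lem3}, and some repair (e.g., assuming $D$ convex, working with closures in $Z$, or reworking the segment argument in Lemma~\ref{Lem3} without convexity of $\widehat C_k$) is needed. In short, the obstacle you flagged is a genuine flaw in the lemma rather than a defect peculiar to your approach, but your proposal as submitted neither proves the stated lemma nor correctly identifies the setting in which the paper uses it.
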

\begin{proof}
Let $z^1$ and $z^2$ be arbitrary points of  $\widehat C$. Given any $t\in (0,1)$, we have to show that $z:=(1-t)z^1+tz^2$ belongs to $\widehat C$. Let $U\subset Z$ be any open set containing~$z$. The map $\psi: Z\times Z\to Z$, where $\psi(x,y):=(1-t)x+ty$ for all $x,y\in Z$, is continuous. Since $\psi(z^1,z^2)=z$, there exist open sets $U_1\subset Z$ and $U_2\subset Z$ such that $z^1\in U_1$, $z^2\in U_2$, and $\psi(x,y)\in U$ for all $x\in U_1$ and $y\in U_2$. As $U_1\cap D$ is a neighborhood of $z^1$ in the relative topology of $D$, we can find $u^1\in (U_1\cap D)\cap C$. Since  $(U_1\cap D)\cap C= U_1\cap D\cap C= U_1\cap C$, we have $u^1\in U_1\cap C$. Arguing similarly, we can find $u^2\in U_2\cap C$. On one hand, by the convexity of $C$ we have $(1-t)u^1+tu^2\in C$. On the other hand, by the above construction of $U_1$ and $U_2$, 
$$\psi(u^1,u^2)=(1-t)u^1+tu^2\in U.$$
Therefore, $(1-t)u^1+tu^2\in U\cap C$. As $(U\cap D)\cap C=U\cap C$, this implies that $(U\cap D)\cap C$ is nonempty. Thus, we have proved that $z\in \widehat C$. $\hfill\Box$
\end{proof}

\begin{Lemma}\label{Lem3}
	Suppose that $D=\bigcup\limits_{k=1}^mC_k$, where $C_1,\dots,C_m$ are nonempty convex sets in a topological vector space $Z$. If $D$ is connected in the relative topology, then $D$ is connected by line segments, i.e., for any $z,w\in D$, there exists a finite sequence of line segments $L_i:=[z_i,z_{i+1}]$, $i=1,...,r-1$, such that $z_1=z$,  and $z_r=w$, and $L_i\subset D$ for $i=1,...,r-1$.
\end{Lemma}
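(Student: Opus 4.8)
The plan is to reduce polygonal connectivity to an equivalence relation on the index set $\{1,\dots,m\}$ and then use the topological connectivity of $D$ to force that relation to have a single class. First I would call two points $z,w\in D$ \emph{joinable} if there is a polygonal path in $D$ between them, i.e.\ a finite chain of segments $[z_i,z_{i+1}]\subseteq D$ with $z_1=z$ and $z_r=w$; this is the property we must establish for every pair. Joinability is an equivalence relation: reflexivity and symmetry are immediate, and transitivity follows by concatenating two chains. Because each $C_k$ is convex, any two of its points are joined by the single segment between them, so $C_k$ lies entirely in one joinability class. Consequently there are only finitely many classes, say $D_1,\dots,D_s$, each of the form $D_t=\bigcup_{k\in P_t}C_k$ for some $P_t\subseteq\{1,\dots,m\}$; they are pairwise disjoint and cover $D$.

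The heart of the argument is to show that each $D_t$ is \emph{closed in the relative topology of $D$}. Once this is done, since the $D_t$ are finite in number and partition $D$, each is also relatively open (its complement being a finite union of the others), hence relatively clopen; connectedness of $D$ then forces $s=1$, and every pair $z,w\in D$ lands in the single class $D_1=D$ and is joinable, which is exactly the conclusion. To prove relative closedness I would invoke Lemma~\ref{Lem2}: for convex $C_l\subseteq D$ its relative closure $\widehat{C_l}$ is a convex subset of $D$. The key observation is that if $C_k\cap\widehat{C_l}\neq\emptyset$ then $C_k$ and $C_l$ lie in the same class: taking $p\in C_k\cap\widehat{C_l}$ and any $q\in C_l$, both $p$ and $q$ lie in the convex set $\widehat{C_l}\subseteq D$, so $[p,q]\subseteq D$ and $p,q$ are joinable.

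I would then finish by noting that finite unions commute with closure, so the relative closure of $D_t=\bigcup_{l\in P_t}C_l$ equals $\bigcup_{l\in P_t}\widehat{C_l}$. If a point $z\in D$ lies in this closure, then $z\in C_k$ for some $k$ and $z\in\widehat{C_l}$ for some $l\in P_t$; the observation above makes $C_k$ and $C_l$ joinable, whence $k\in P_t$ and $z\in D_t$. Thus $D_t$ is relatively closed, the clopen partition is in place, and $s=1$ follows.

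The main obstacle I anticipate is precisely this relative-closedness step. The naive bookkeeping device, a graph whose edges record $C_k\cap C_l\neq\emptyset$, is too coarse: a segment realizing joinability may pass through several pieces at once, as in $D=\{0\}\cup(0,1]$, so the plain intersection relation can split $D$ into more classes than joinability does. Replacing intersection of pieces by intersection with the relative closure $\widehat{C_l}$, and leaning on Lemma~\ref{Lem2} to guarantee that the joining segment $[p,q]$ stays inside $D$, is exactly what repairs this gap; getting that interplay right is the crux, while the equivalence-relation bookkeeping and the final clopen-partition argument are routine.
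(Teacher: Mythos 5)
Your proof is correct and follows essentially the same route as the paper's: both arguments rest on Lemma~\ref{Lem2} (convexity of the closures $\widehat{C}_k$ in the relative topology of $D$), on the fact that a piece meeting such a closure can be joined to it by a single segment inside $D$, and on exhibiting a partition of $D$ into finitely many nonempty relatively closed sets to contradict connectedness. The only difference is organizational: the paper builds one maximal segment-connected union greedily by chaining the sets $\widehat{C}_k$, whereas you package the same idea as an equivalence relation of joinability whose classes are shown to be relatively closed.
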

\begin{proof} Put $K=\{1,\dots,m\}$. From the equality $D=\bigcup\limits_{k=1}^mC_k$ we deduce that 
	\begin{equation}\label{D_uninion}
		D=\bigcup\limits_{k=1}^m\widehat C_k,
	\end{equation}
where each $\widehat C_k$, $k\in K$, stands for the closure of $C_k$ in the relative topology of $D$. Indeed, since $C_k\subset \widehat C_k$ for every $k\in K$, the inclusion $D\subset\bigcup\limits_{k=1}^m\widehat C_k$ holds. The reverse inclusion is valid because $\widehat C_k\subset D$ for each $k\in K$.
	
	Pick any $k_1\in K$. If there is no $k\in K\setminus\{k_1\}$ such that $\widehat C_k\cap \widehat C_{k_1}\neq\emptyset$, then we put $A=\widehat C_{k_1}$. Since $\widehat C_{k_1}$ is convex by the convexity of $C_{k_1}$ and by Lemma~\ref{Lem2}, for any $z,w\in\widehat C_{k_1}$, the line segment $[z,w]$ lies in $\widehat C_{k_1}$. Thus,~$A$ is connected by line segments. 
	
	If there exists an index $k_2\in K\setminus\{k_1\}$ such that $\widehat C_{k_2}\cap\widehat C_{k_1}\neq\emptyset$, then the set $\Omega:=\widehat C_{k_1}\cup\widehat C_{k_2}$ is connected by line segments. Indeed, given any $z,w\in \Omega$, we see at once that there is a line segment joining $z$ with $w$ if both points $z$ and $w$ lie either in $\widehat C_{k_1}$ or in $\widehat C_{k_2}$. In the remaining situation, we may suppose that $z\in\widehat C_{k_1}$ and $w\in\widehat C_{k_2}$. Taking any $u\in \widehat C_{k_2}\cap\widehat C_{k_1}$, we have $[z,u]\subset \widehat C_{k_1}$ and $[u,w]\subset \widehat C_{k_2}$. Hence, $z$ can be joined with  $w$ by a sequence of two line segments, where each segment is contained in $\Omega$.
	
	If there is no $k\in K\setminus\{k_1,k_2\}$ such that $\widehat C_k\cap \Omega\neq\emptyset$, then we put $A=\widehat C_{k_1}\cup\widehat C_{k_2}$ and notice that~$A$ is connected by line segments. 
	
	If there exists an index $k_3\in K$ such that  $\widehat C_{k_3}\cap \Omega\neq\emptyset$, then we enlarge $\Omega$ by setting $\Omega=\widehat C_{k_1}\cup\widehat C_{k_2}\cup\widehat C_{k_3}$. Arguing similarly as above, one can easily show that any two points of~$\Omega$ can be joined by a sequence of line segments consisting of at most three segments, where each segment is contained in $\Omega$. 
	
	If $D=\Omega$, then we are done. Otherwise, we can find an index $k\in K\setminus\{k_1,k_2,k_3\}$ such that  $\widehat C_{k}\cap \Omega\neq\emptyset$, and we can enlarge $\Omega$ by letting this $\widehat C_{k}$ join the existing union of $\widehat C_{k_i}$, $i=1,2,3$. Again, the new set $\Omega$ is connected by line segments... By continuing this process, we can get a set $\Omega$ of the form $\Omega=\widehat C_{k_1}\cup\widehat C_{k_2}\cup\widehat C_{k_3}\cup\cdots\cup \widehat C_{k_s}$ with $k_i\in K$ for all $i\in\{1,\dots,s\}$, which is connected by line segments, such that there is no $k\in K\setminus\{k_1,\dots,k_s\}$ satisfying $\widehat C_{k}\cap \Omega\neq\emptyset$. Put $A=\Omega$.
	
	The proof is completed if $D=A$. Otherwise, the union of all $\widehat C_{k}$ with $k\in K$ and $\widehat C_{k}\cap A=\emptyset$, which is denoted by $B$, is nonempty. Clearly,  from our construction it follows that both sets~$A$ and~$B$ are closed in the relative topology of $D$. Since $A$ and $B$ are nonempty and disjoint,~$D$ is disconnected in the relative topology. But this contradicts the assumptions made on~$D$.  
	
	Thus, we have shown that $D$ is connected by line segments. $\hfill\Box$ \end{proof}

\begin{Theorem}\label{thm_const_value} If the assumptions of Theorem~\ref{thm_structure1} are satisfied and the restriction of $g$ on each line segment contained in $C$ is a continuous function, then every connected component of~${\mathcal S}_1$ is a union of finitely many convex sets and the function~$f:=g-h$ has a constant value on the component.
\end{Theorem}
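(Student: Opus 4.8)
The plan is to combine the finite convex decomposition of ${\mathcal S}_1$ coming from Theorem~\ref{thm_structure1} with the line-segment connectedness supplied by Lemma~\ref{Lem3}, and then to analyze $f$ one segment at a time. First I would dispose of the structural claim. By Theorem~\ref{thm_structure1} (see formula~\eqref{union}), ${\mathcal S}_1$ is a finite union of the convex sets ${\mathcal S}_1(J_1)$, $J_1\subset J$ nonempty. Since every convex set is connected, each nonempty piece ${\mathcal S}_1(J_1)$ lies entirely inside one connected component of ${\mathcal S}_1$. Hence a given connected component $\Gamma$ equals the union of those finitely many pieces ${\mathcal S}_1(J_1)$ that it meets, so $\Gamma$ is a union of finitely many convex sets. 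This proves the first assertion and, crucially, puts $\Gamma$ in the form required to invoke Lemma~\ref{Lem3}.

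Next I would reduce the constancy of $f=g-h$ on $\Gamma$ to constancy along line segments. Writing $\Gamma$ as the finite union of convex sets just obtained and applying Lemma~\ref{Lem3}, any two points $z,w\in\Gamma$ are joined by a finite chain of line segments, each contained in $\Gamma\subset{\mathcal S}_1\subset C$. So it suffices to establish the following claim: \emph{$f$ is constant on every line segment $[z,w]\subset{\mathcal S}_1$}; granting it, $f(z)=f(w)$ for all $z,w\in\Gamma$, and $f$ is constant on $\Gamma$.

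To prove the claim I would parametrize the segment by $x(t)=(1-t)z+tw$, $t\in[0,1]$, and study $\phi(t):=f(x(t))$. Since $[z,w]\subset C\subset{\rm int}({\rm dom}\,h)$, the representation~\eqref{h} gives $h(x(t))=\max_{j\in J}[\langle v_j^*,x(t)\rangle+\beta_j]$, a finite maximum of affine functions of $t$; thus $h(x(\cdot))$ is convex and piecewise affine, with finitely many breakpoints $0=\tau_0<\cdots<\tau_N=1$ and affine on each $[\tau_l,\tau_{l+1}]$. By the extra hypothesis, $g(x(\cdot))$ is continuous, and it is convex as the restriction of the convex function $g$ to a segment. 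Therefore $\phi$ is continuous on $[0,1]$, and on each subinterval $[\tau_l,\tau_{l+1}]$ it is a convex function minus an affine one, hence convex there.

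Finally I would finish the segment argument and identify the delicate point. Because $[z,w]\subset{\mathcal S}_1$, each $x(t_0)$ is a local solution of~\eqref{DC_constraint_prob}; transporting this through the continuity of $x(\cdot)$ shows that $\phi$ attains a local minimum at every $t_0\in[0,1]$. A convex function on an interval that attains a local minimum at an interior point attains its global minimum over the interval there; hence on each $[\tau_l,\tau_{l+1}]$ every interior point is a global minimizer, all interior values coincide, and $\phi$ is constant on $(\tau_l,\tau_{l+1})$, so by continuity on $[\tau_l,\tau_{l+1}]$. Matching these constants at the breakpoints via continuity of $\phi$ yields that $\phi$ is constant on all of $[0,1]$, i.e. $f$ is constant on $[z,w]$. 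The main obstacle is precisely this last step: ``local minimum at every point'' does not force an arbitrary continuous function to be constant, and the claim survives only because of the interplay between the convexity and continuity of $g$ along segments and the piecewise-affine (generalized polyhedral) character of $h$ — which is exactly where both the new continuity hypothesis on $g$ and the polyhedrality of $h$ are consumed.
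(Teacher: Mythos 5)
Your proof is correct, and its first two stages (a connected component is the union of the finitely many convex pieces of ${\mathcal S}_1$ it meets, and constancy of $f$ on the component reduces, via Lemma~\ref{Lem3}, to constancy along line segments contained in ${\mathcal S}_1$) coincide with the paper's argument. Where you genuinely diverge is the segment step. The paper does not use convexity there at all: it only notes that $f$ is continuous on the segment $L_i$ (continuity of $g$ by the extra hypothesis, of $h$ from the max-of-affine representation~\eqref{h}), invokes the Weierstrass theorem to get the maximum value $\gamma$ of $f$ over $L_i$, and shows that $D=\{x\in L_i\mid f(x)=\gamma\}$ is nonempty, closed, and---because every point of $L_i$ is a local minimizer of $f$ over $C$---open in the relative topology of $L_i$; connectedness of $L_i$ then forces $D=L_i$. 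You instead subdivide the segment at the breakpoints of the piecewise affine function $h\circ x$, observe that $\phi=f\circ x$ is convex on each subinterval, and use that for convex functions local minimizers are global, stitching the constants together by continuity. Both routes are valid; yours exploits the DC/polyhedral structure more explicitly, while the paper's is more economical in that it needs nothing beyond continuity along the segment plus local minimality at every point. For this reason your closing diagnosis is slightly off: on a compact connected set such as a segment, a continuous function having a local minimum at every point \emph{is} forced to be constant (this is exactly the paper's clopen argument), so the convexity of $g$ and the piecewise-affine character of $h$ are consumed, in the paper's proof, only to secure continuity of $f$ along the segment, not to rescue the local-minimality argument.
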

\begin{proof} By Theorem~\ref{thm_structure1}, the local solution set ${\mathcal S}_1$ of the DC problem~\eqref{DC_constraint_prob} is a union of finitely many convex subsets of $C$. Denote these convex sets by $A_1,\ldots,A_\ell$ and put $K=\{1,\ldots,\ell\}$. Let $\Omega$ be a connected component of ${\mathcal S}_1$. Clearly, if $A_k\cap\Omega\neq\emptyset$ for some $k\in K$, then $A_k\subset\Omega$. So, $\Omega$ is the union of several sets $A_k$, $k\in K$. Therefore, without any loss of generality, we may assume that 
		\begin{equation}\label{connected_component}
		\Omega=A_1\cup \cdots \cup A_{\ell_1},
		\end{equation}
where $\ell_1\leq\ell$. The first assertion of the theorem has been proved.

Since $\Omega$ is connected and every set $A_k, k\in K$, is convex, by~\eqref{connected_component} and by Lemma~\ref{Lem3} we can infer that is connected by line segments.

Given any $z,w\in\Omega$, we find a finite sequence of line segments $L_i:=[z_i,z_{i+1}]$, $i=1,...,r-1$, such that $z_1=z$,  and $z_r=w$, and $L_i\subset \Omega$ for $i=1,...,r-1$.

To complete the proof, it suffices to show that the function $f=g-h$ is constant on each line segment $L_i$, where $i\in\{1,\ldots,r-1\}$. By our assumptions, $g:X\to\overline{\mathbb {R}}$ is a proper convex function, $h:X\to\overline{\mathbb{R}}$ is a proper generalized polyhedral convex function, and both sets ${\rm dom}g$ and ${\rm int}({\rm dom}h)$ contain~$C$. In addition, the restriction of $g$ on $L_i$ is continuous. By~\eqref{h}, the restriction of $h$ on $L_i$ admits the representation $h(x)=\max\limits_{j\in J} \big[\langle v_j^*, x \rangle + \beta_j]$. Thus, the continuity of the affine functions $x\mapsto \langle v_j^*, x \rangle + \beta_j$, $j\in J$, implies that the restriction of $h$ on $L_i$ is continuous. Therefore, the restriction of $f=g-h$ on $L_i$ is continuous. By the Weierstrass theorem (see, e.g.,~\cite[Theorem~1.59]{Mordukhovich_Nam_ConvexAnalysis_2022}), $f$ attains its maximum value $\gamma\in\mathbb R$ on $L_i$ at a point $\bar x$. Let 
	$D=\big\{x\in L_i\mid f(x)=\gamma\big\}.$ Then, $D$ is closed and nonempty. For any $u\in D$, as $u\in L_i$ and $L_i\subset {\mathcal S}_1$, $u$ is a local solution of~\eqref{DC_constraint_prob}. Hence, there is an open a neighborhood $U$ of $u$ such that $f(u)\leq f(x)$ for all $x\in C\cap U$. In particular, 
	$$f(u)\leq f(x)\quad \forall x\in L_i\cap U.$$
 On the other hand, since $f(u)=\gamma$, we have $f(u)\geq f(x)$ for all $x\in L_i.$ So, we must have $f(x)=\gamma$ for all $x\in L_i\cap U.$ This means that $D$ contains the set $L_i\cap U$. As the latter is an open set in the relative topology of $L_i$, we have proved that $D$ is an open set in the relative topology of $L_i$. Thus, $D$ is a nonempty subset of $L_i$, which is both closed and open. Then, by the connectedness of $L_i$, we can infer that $D=L_i$ (see~\cite[Proposition~1.61]{Mordukhovich_Nam_ConvexAnalysis_2022}). Thus, $f$ is constant on $L_i$.	
 
 The proof is complete. $\hfill\Box$
\end{proof}

\begin{Theorem}\label{thm_const_value_2} If the assumptions of Theorem~\ref{thm_structure2} are satisfied, then every connected component of~${\mathcal S}_{1a}$ is a union of finitely many semi-closed generalized polyhedral convex sets and the function~$f:=g-h$ has a constant value on the component.
\end{Theorem}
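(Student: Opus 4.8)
The plan is to adapt the proof of Theorem~\ref{thm_const_value} almost verbatim, the one new point being that the extra hypothesis required there---continuity of $g$ on line segments contained in $C$---is now automatic. Indeed, since $g$ is generalized polyhedral convex, Lemma~\ref{lem_rep_gpcf} represents $g$ as the maximum of finitely many continuous affine functions on ${\rm dom}\,g$; because every line segment $L\subset C$ lies in $C\subset{\rm dom}\,g$, the restriction $g|_L$ is a finite maximum of continuous functions and hence continuous. Thus all hypotheses of Theorem~\ref{thm_const_value} are in force, and I may run that argument while sharpening ``convex'' to ``semi-closed generalized polyhedral convex'' wherever the finer structure supplied by Theorem~\ref{thm_structure2} is available.

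First I would invoke Theorem~\ref{thm_structure2} to write ${\mathcal S}_{1a}=A_1\cup\cdots\cup A_\ell$ with each $A_k$ a semi-closed generalized polyhedral convex subset of $C$; each $A_k$ is the intersection of a generalized polyhedral convex set with a relatively open convex set, hence convex and therefore connected. Fixing a connected component $\Omega$, the connectedness of each $A_k$ forces $A_k\subset\Omega$ whenever $A_k\cap\Omega\neq\emptyset$, so $\Omega$ is the union of the finitely many $A_k$ that it meets. This establishes the first assertion.

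For the constancy of $f=g-h$, since $\Omega$ is connected and is a finite union of convex sets, Lemma~\ref{Lem3} shows that $\Omega$ is connected by line segments, so it suffices to prove $f$ constant on each segment $L\subset\Omega$ and then concatenate the segments of a polygonal path joining two arbitrary points of $\Omega$. On such an $L$ both $g$ and $h$ are continuous---$g$ by the observation above, and $h$ by~\eqref{h} together with the continuity of the affine maps $x\mapsto\langle v_j^*,x\rangle+\beta_j$---so $f|_L$ is continuous.

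The one genuinely technical step, which I would reuse unchanged from Theorem~\ref{thm_const_value}, is the clopen argument on each segment, and it is here that I expect whatever difficulty there is to lie. By the Weierstrass theorem $f$ attains its maximum value $\gamma$ on $L$, and $D:=\{x\in L\mid f(x)=\gamma\}$ is nonempty and closed. For each $u\in D$, the inclusion $u\in L\subset{\mathcal S}_{1a}$ makes $u$ a local solution of~\eqref{DC_constraint_prob}, so some neighborhood $U$ yields $f(u)\leq f(x)$ for all $x\in L\cap U$; combined with $f(u)=\gamma\geq f(x)$ on all of $L$, this forces $f\equiv\gamma$ on $L\cap U$, whence $D$ is open in the relative topology of $L$. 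As $D$ is nonempty, closed, and open in the connected set $L$, we conclude $D=L$, that is, $f\equiv\gamma$ on $L$. Constancy of $f$ on $\Omega$ follows by concatenation.
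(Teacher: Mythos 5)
Your proposal is correct and follows essentially the same route as the paper: invoke Theorem~\ref{thm_structure2} for the decomposition into finitely many semi-closed generalized polyhedral convex sets, observe that the continuity of $g$ along line segments in $C$ is automatic because $g$ is generalized polyhedral convex with ${\rm dom}\,g\supset C$, and then rerun the argument of Theorem~\ref{thm_const_value} (Lemma~\ref{Lem3} plus the clopen argument on each segment). The only difference is that you spell out the steps the paper compresses into ``repeat some arguments used in the proof of Theorem~\ref{thm_const_value}.''
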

\begin{proof} By Theorem~\ref{thm_structure2}, the local solution set ${\mathcal S}_{1a}$ of the generalized polyhedral DC problem~\eqref{DC_constraint_prob} is a union of finitely many semi-closed generalized polyhedral convex sets. In addition, since $g:X\to\overline{\mathbb {R}}$ is a generalized polyhedral convex function and ${\rm dom}g\supset C$, the restriction of $g$ on each line segment contained in $C$ is a continuous function. So, to obtain the desired conclusions, it suffices to repeat some arguments used in the proof of Theorem~\ref{thm_const_value}. $\hfill\Box$
\end{proof}

\begin{Remark}{\rm The results in Theorem~\ref{thm_const_value} (resp., in Theorem~\ref{thm_const_value_2}) show that the local solution set~${\mathcal S}_1$ (resp., the local solution set~${\mathcal S}_{1a}$) has a connectedness structure similar to the one of the KKT point set of an indefinite quadratic program a Hilbert
space~\cite[Proposition 3.8]{LTY_JOGO2023}. Note that Proposition~3.8 of \cite{LTY_JOGO2023} is an extension for Lemma~3.1 from~\cite{Luo_Tseng_1992}.}
\end{Remark}

\begin{Example}\label{Example2}
	{\rm Let $X, C, g, h$ be defined as in Example~\ref{Example1}. Formula~\eqref{S1_2} shows that the local solution set~${\mathcal S}_{1a}$ of~\eqref{DC_constraint_prob} is the union of three semi-closed generalized polydedral convex subsets of $C$, each of them is a connected component of ${\mathcal S}_{1a}$. In addition, the objective function $f:=g-h$ has a constant value on each one of these connected components. So, the example under our consideration can serve as a good illustration for Theorems~\ref{thm_structure1}--\ref{thm_const_value_2}.}
\end{Example}
  
\section{Solution Algorithms via Duality}\label{sect_5}
\markboth{\sc v. t. huong, d. t. k. huyen, and n.~d.~yen}{\sc  v. t. huong, d. t. k. huyen, and n.~d.~yen}
\setcounter{equation}{0}

The following theorem presents some conditions under which both primal problem and dual problem are gpdc programs in the sense of Definition~\ref{def_gpdc}.

\begin{Theorem}\label{Thm_duality_1} Let $g:X\to\overline{\mathbb {R}}$ be a lower semicontinuous, proper, convex function, $h:X\to\overline{\mathbb{R}}$ a proper generalized polyhedral convex function, and $C\subset X$ a nonempty closed convex set with $({\rm dom}\, g)\cap C\neq\emptyset$. Then, the optimal values of~\eqref{DC_constraint_prob} and~\eqref{dual_program_1} are equal. 
If, in addition, $g$ is a proper generalized polyhedral convex function and the set $C\subset X$ is generalized polyhedral convex, then both primal problem~\eqref{DC_constraint_prob} and dual problem~\eqref{dual_program_1} are gpdc programs, i.e., both problems have the same structure.
\end{Theorem}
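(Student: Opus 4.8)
The plan is to split the statement into its two assertions: the equality of optimal values, which I expect to follow directly from the duality already set up in the preliminaries, and the structural claim that both problems are gpdc programs, which I would reduce to closure properties of the class of generalized polyhedral convex functions under summation and Fenchel conjugation.

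First I would dispatch the equality of optimal values. Under the hypotheses of the theorem, $h$ is a proper generalized polyhedral convex function, hence $h\in\Gamma_0(X)$. Moreover $g$ is lower semicontinuous, proper, and convex, and $\delta_C\in\Gamma_0(X)$ since $C$ is nonempty, closed, and convex; because $\dom(g+\delta_C)=(\dom g)\cap C\neq\emptyset$ while $g+\delta_C>-\infty$ everywhere, the function $g+\delta_C$ also lies in $\Gamma_0(X)$. Thus the primal problem~\eqref{DC_constraint_prob}, written in its unconstrained form~\eqref{DC_unconstraint_prob} with convex components $g+\delta_C$ and $h$, falls exactly within the scope of Corollary~\ref{Cor_duality} (equivalently, of Toland--Singer's Theorem~\ref{Toland-Singer's_thm}). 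The equality of the optimal values of~\eqref{DC_constraint_prob} and~\eqref{dual_program_1} follows at once.

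Next I would treat the structural claim under the additional hypotheses that $g$ is proper generalized polyhedral convex and $C$ is generalized polyhedral convex. That the primal problem~\eqref{DC_constraint_prob} is then a gpdc program is immediate from Definition~\ref{def_gpdc}, since its two convex components $g,h$ are generalized polyhedral convex and its feasible set $C$ is generalized polyhedral convex. For the dual problem~\eqref{dual_program_1}, the feasible set is the whole space $X^*$, which is trivially a generalized polyhedral convex set, so it remains only to verify that its two convex components $h^*$ and $(g+\delta_C)^*$ are generalized polyhedral convex functions on $X^*$. As a preliminary step I would check that $g+\delta_C$ is generalized polyhedral convex: $g$ and $\delta_C$ are generalized polyhedral convex and $\dom g\cap\dom\delta_C=(\dom g)\cap C\neq\emptyset$, so Theorem~3.7 of~\cite{Luan_Yao_Yen_2018} applies precisely as in the proof of Theorem~\ref{thm_structure2}.

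The crux of the argument, and the step I expect to be the main obstacle, is the stability of the class of proper generalized polyhedral convex functions under Fenchel conjugation: if $\psi\in\Gamma_0(X)$ is proper generalized polyhedral convex, then $\psi^*$ is a proper generalized polyhedral convex function on $X^*$ (recall that $X^{**}=X$ because $X^*$ carries the weak$^*$ topology, so generalized polyhedral convexity on $X^*$ is meaningful). Applying this to $\psi=h$ and to $\psi=g+\delta_C$ shows that $h^*$ and $(g+\delta_C)^*$ are proper generalized polyhedral convex on $X^*$, whence~\eqref{dual_program_1} is a gpdc program by Definition~\ref{def_gpdc}. Rather than reprove the conjugation-stability result, I would invoke the corresponding statement on conjugates of generalized polyhedral convex functions established in~\cite{Luan_Yao_Yen_2018}; should a self-contained argument be wanted, the representation in Lemma~\ref{lem_rep_gpcf} reduces the computation of $\psi^*$ to conjugating a maximum of finitely many continuous affine functions over a generalized polyhedral convex domain, and one then checks that the resulting epigraph is cut out by finitely many continuous affine (in)equalities together with a closed affine subspace in $X^*\times\mathbb{R}$.
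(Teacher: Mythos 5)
Your proposal is correct and follows essentially the same route as the paper: the first assertion via Corollary~\ref{Cor_duality}, and the second via the fact that $g+\delta_C$ is generalized polyhedral convex (Theorem~3.7 of~\cite{Luan_Yao_Yen_2018}) together with the stability of this class under Fenchel conjugation (Theorem~4.12 of~\cite{Luan_Yao_Yen_2018}, the result you invoke). The only difference is that you spell out the verification that $g+\delta_C\in\Gamma_0(X)$ and sketch an optional self-contained conjugation argument, which the paper omits.
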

\begin{proof} The first assertion follows from Corollary~\ref{Cor_duality}. Now, suppose that $g$ is a proper generalized polyhedral convex function and the set $C\subset X$ is generalized polyhedral convex. Then, $\delta_C$ is a proper generalized polyhedral convex function. The condition $({\rm dom}\, g)\cap C\neq\emptyset$ implies that $g+\delta_C$ is a proper generalized polyhedral convex function (see~~\cite[Theorem 3.7]{Luan_Yao_Yen_2018}).  By~\cite[Theorem 4.12]{Luan_Yao_Yen_2018}, the conjugate function of a proper generalized polyhedral convex function is a proper generalized polyhedral convex function. So, the second assertion is valid. $\hfill\Box$
\end{proof}

\medskip
The celebrated scheme for DC algorithms (called \textit{DCA scheme} for brevity) was given in~\cite{Tao_An_AMV97,Tao_An_SIOPT98} in a finite-dimensional setting. Recently, it has been reformulated in~\cite{LTY_JOGO2023} for a Hilbert space setting. We can describe the DCA for the problem~\eqref{DC_constraint_prob} in a locally convex Hausdorff topological vector space setting, which can be treated as the unconstrained DC problem~\eqref{DC_unconstraint_prob}, as follows.

\medskip
{\bf  DCA scheme}\\
{\it
	$\bullet$ Choose $x^0\in {\rm dom}\, (g+\delta_C)=({\rm dom}\, g)\cap C$.\\
	$\bullet$ For every integer $k\ge 0$, if $x^k$ has been defined and $\partial h(x^k)\neq\emptyset$, then select a vector
	\begin{equation}\label{existing_condition_of_y_k}
		\xi^k\in \partial h(x^k).
	\end{equation}
	$\bullet$ For every integer $k\ge 0$, if $\xi^k$ has been defined and $\partial (g+\delta_C)^*(\xi^k)\neq\emptyset$, then select a vector
	\begin{equation}\label{existing_condition_of_x_k}
		x^{k+1}\in \partial (g+\delta_C)^*(\xi^k).
\end{equation}} 
\begin{figure}[htp]
	\begin{center}
		\includegraphics[height=8.5cm,width=12cm]{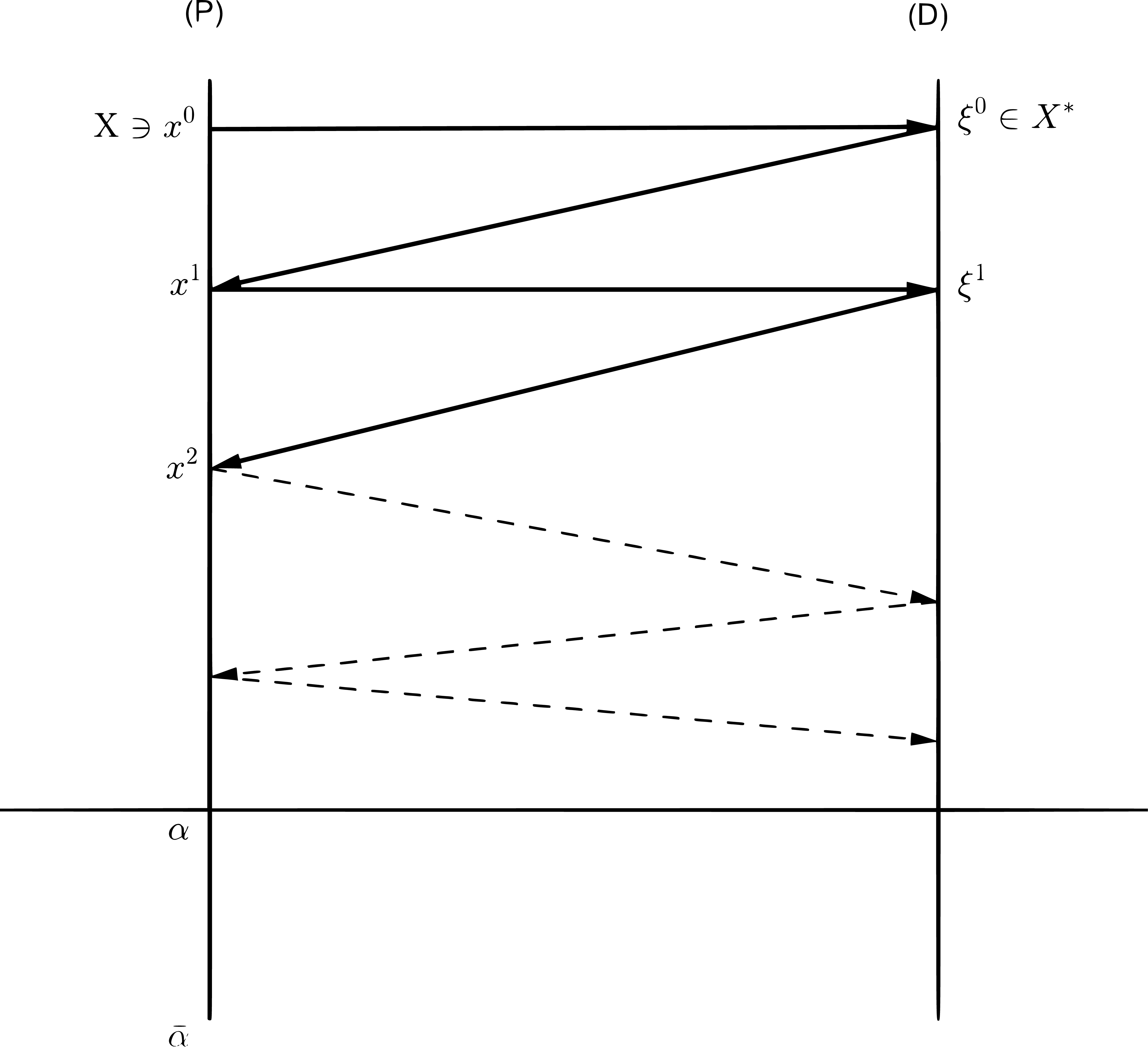}
	\end{center}
	\caption{DCA scheme}\label{Fig_2}
\end{figure}
On one hand, by Proposition~\ref{Equivalence_of_two_inclusions} we can rewrite the inclusion~\eqref{existing_condition_of_y_k} equivalently as
$x^k\in\partial h^*(\xi^k)$. The latter means that
$$h^*(x^*)-h^*(\xi^k)\geq \langle x^*-\xi^k,x^k\rangle\quad \forall x^*\in X^*.$$ So,
$$h^*(x^*)-\langle x^*,x^k\rangle\geq h^*(\xi^k)-\langle \xi^k,x^k\rangle\quad \forall x^*\in X^*.$$ Thus, $\xi^k$ is a solution to the convex optimization problem
\begin{equation}\label{convex_program_1}
	{\rm Minimize}\ \; h^*(x^*)-\langle x^*,x^k\rangle,\ \; x^*\in X^*.
\end{equation} On the other hand, condition~\eqref{existing_condition_of_y_k} is equivalent to the fulfillment of the requirement $x^k\in {\rm dom}\, h$ and the condition
$$h(x)-h(x^k)\geq \langle \xi^k,x-x^k\rangle\quad\; \forall x\in X,$$ which means that
\begin{equation}\label{condition_5.1} h(x)-\langle \xi^k,x\rangle\geq h(x^k)-\langle \xi^k,x^k\rangle\quad\; \forall x\in X.
\end{equation}

Applying Proposition~\ref{Equivalence_of_two_inclusions} once again, we can rewrite the inclusion~\eqref{existing_condition_of_x_k} equivalently as $\xi^k\in\partial (g+\delta_C)(x^{k+1})$. The latter signifies that $$x^{k+1}\in {\rm dom}\, (g+\delta_C)=({\rm dom}\, g)\cap C$$ and 
$$(g+\delta_C)(x)-(g+\delta_C)(x^{k+1})\geq \langle\xi^k,x-x^{k+1}\rangle\quad\; \forall x\in X.$$ Hence,
\begin{equation}\label{condition_5.2} g(x)-\langle \xi^k,x\rangle\geq g(x^{k+1})-\langle \xi^k,x^{k+1}\rangle\quad\; \forall x\in C.
\end{equation} This means that $x^{k+1}$ is a solution to the convex optimization problem
\begin{equation}\label{convex_program_2}
{\rm Minimize}\ \;  g(x)-\langle \xi^k,x\rangle,\ \; x\in C.
\end{equation} Substitute $x=x^{k+1}$ to the inequality in~\eqref{condition_5.1}, $x=x^k$ to the inequality in~\eqref{condition_5.2}, add the resulted inequalities side-by-side, and make a simple rearrangement to get $$g(x^k)-h(x^k)\geq g(x^{k+1})-h(x^{k+1}).$$ It follows that
\begin{equation}\label{decreasing}
	f(x^k)\geq f(x^{k+1})\quad\; (k=0,1,2,\ldots),
\end{equation} where $f=g-h$ is the objective function of~\eqref{DC_constraint_prob}.

We have thus proved the next result, which is originated from Pham Dinh and Le Thi's pioneering research works~\cite{Tao_An_AMV97,Tao_An_SIOPT98}, about distinguished features of the above infinite-dimensional DCA scheme.  

\begin{Theorem}\label{Thm_DCA1} The DCA sequences $\{x^k\}$ and $\{\xi^k\}$ constructed by the infinite-dimen\-sio\-nal \textbf{DCA scheme} have the following properties:
	\begin{itemize}
		\item[{\rm (a)}] $\{x^k\}\subset ({\rm dom}\, g)\cap ({\rm dom}\, h)\cap C$ and $\{\xi^k\}\subset \big({\rm dom}\, (g+\delta_C)^*\big)\cap {\rm dom}\, h^*$.
		\item[{\rm (b)}] At each step $k\geq 0$, the procedure~\eqref{existing_condition_of_y_k} to find $\xi^k$ can be realized either by computing the subdifferential $\partial h(x^k)$ of $h$ at the current iteration point $x^k$ or solving the convex optimization problem~\eqref{convex_program_1}, provided that the conjugate function $h^*$ can be defined explicitly.
		\item[{\rm (c)}] At each step $k\geq 1$, the procedure~\eqref{existing_condition_of_x_k} to find $x^{k+1}$ can be realized by solving the convex optimization problem~\eqref{convex_program_2} or computing the subdifferential $\partial (g+\delta_C)^*(\xi^k)$ of the conjugate function $(g+\delta_C)^*$ at the current iteration point $\xi^k$, provided that the conjugate function $(g+\delta_C)^*$ can be defined explicitly.
		\item[{\rm (d)}] The value of the objective function of~\eqref{DC_constraint_prob} decreases monotonically along the sequence $\{x^k\}$, i.e., the inequality in~\eqref{decreasing} is valid for all $k\in\mathbb N$.	
	\end{itemize}
\end{Theorem}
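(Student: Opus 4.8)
The plan is to read all four assertions off the computations carried out immediately before the statement, whose engine is the subgradient--conjugate equivalence of Proposition~\ref{Equivalence_of_two_inclusions} (together with Proposition~\ref{inclusion_of_x_and_equality}). No new estimates are needed: the task is to match each already-derived inclusion to the corresponding item and to keep careful track of domain membership. First I would fix an arbitrary step index $k$ and treat the two selection rules of the \textbf{DCA scheme} in turn.

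For part~(a), I would track both sequences through the two selection steps. The starting point satisfies $x^0\in(\dom g)\cap C$ by construction. Whenever $\xi^k$ is selected, the scheme requires $\partial h(x^k)\neq\emptyset$, and a point with nonempty subdifferential lies in the effective domain, so $x^k\in\dom h$; together with $x^{k+1}\in(\dom g)\cap C$ this gives $\{x^k\}\subset(\dom g)\cap(\dom h)\cap C$. The membership $x^{k+1}\in(\dom g)\cap C$ follows because, by Proposition~\ref{Equivalence_of_two_inclusions}, the inclusion $x^{k+1}\in\partial(g+\delta_C)^*(\xi^k)$ is equivalent to $\xi^k\in\partial(g+\delta_C)(x^{k+1})$, which places $x^{k+1}$ in $\dom(g+\delta_C)=(\dom g)\cap C$. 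For the dual sequence, the same proposition turns $\xi^k\in\partial h(x^k)$ into $x^k\in\partial h^*(\xi^k)$, giving $\xi^k\in\dom h^*$; and the inclusion $x^{k+1}\in\partial(g+\delta_C)^*(\xi^k)$ directly yields $\xi^k\in\dom(g+\delta_C)^*$.

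Parts~(b) and~(c) merely record the two equivalent realizations of each selection step, which is exactly the content of the displayed derivation preceding the theorem. For~(b), the direct route is to compute $\partial h(x^k)$; the alternative uses Proposition~\ref{Equivalence_of_two_inclusions} to rewrite $\xi^k\in\partial h(x^k)$ as $x^k\in\partial h^*(\xi^k)$, which is precisely the optimality of $\xi^k$ for the convex program~\eqref{convex_program_1}. For~(c), the direct route is to compute $\partial(g+\delta_C)^*(\xi^k)$; the alternative rewrites $x^{k+1}\in\partial(g+\delta_C)^*(\xi^k)$ as $\xi^k\in\partial(g+\delta_C)(x^{k+1})$, i.e.\ as the optimality of $x^{k+1}$ for~\eqref{convex_program_2}. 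The provisos about the explicit availability of $h^*$ and $(g+\delta_C)^*$ are practical remarks and carry no mathematical content to verify.

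Finally, for part~(d) I would extract the two descent inequalities already obtained: inequality~\eqref{condition_5.1}, which is the statement $\xi^k\in\partial h(x^k)$ written out, and inequality~\eqref{condition_5.2}, which is the statement $\xi^k\in\partial(g+\delta_C)(x^{k+1})$ written out. Evaluating~\eqref{condition_5.1} at $x=x^{k+1}$ and~\eqref{condition_5.2} at $x=x^k$, then adding the two and cancelling the inner-product terms $\langle\xi^k,x^{k+1}\rangle$ and $\langle\xi^k,x^k\rangle$, yields $g(x^k)-h(x^k)\geq g(x^{k+1})-h(x^{k+1})$, which is exactly the inequality~\eqref{decreasing}. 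I do not expect any genuine obstacle: everything reduces to Proposition~\ref{Equivalence_of_two_inclusions} plus elementary bookkeeping, and the only point needing mild care is part~(a), where one must invoke the nonemptiness requirements built into the scheme to guarantee $x^k\in\dom h$ at each continuing step.
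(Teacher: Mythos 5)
Your proposal is correct and follows essentially the same route as the paper: the paper's justification is precisely the computation displayed before the theorem, which uses Proposition~\ref{Equivalence_of_two_inclusions} to convert~\eqref{existing_condition_of_y_k} and~\eqref{existing_condition_of_x_k} into the dual subgradient inclusions (hence the convex programs~\eqref{convex_program_1} and~\eqref{convex_program_2} and the domain memberships in~(a)), and then obtains~\eqref{decreasing} by substituting $x=x^{k+1}$ into~\eqref{condition_5.1} and $x=x^k$ into~\eqref{condition_5.2} and adding, exactly as you do.
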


\begin{Example}\label{Example3}
	{\rm Consider the DC optimization problem~\eqref{DC_constraint_prob} with $X, C, g, h$ being defined as in Example~\ref{Example1}. Taking different initial points $$x^0\in {\rm dom}\, (g+\delta_C)=({\rm dom}\, g)\cap C=[-2,3]$$ and using the above DCA scheme, we obtain different DCA sequences.
	
1. For $x^0\in [-2,-1)$, since $\partial h(x^0)=\{-1\}$,~\eqref{existing_condition_of_y_k} implies that $\xi^0=-1$. Hence, substituting $k=0$ and $\xi^0=-1$ into the condition~\eqref{existing_condition_of_x_k}, which is equivalent to the requirement that $x^{k+1}$ is a solution of the convex optimization problem~\eqref{convex_program_2}, gives $x^1=-2$. Further computation shows that $x^k=-2$ for all $k\geq 2$. Thus, the DCA scheme yields the local solution $\bar x:=-2$.

2. For $x^0\in (1,3]$, since $\partial h(x^0)=\{1\}$,~\eqref{existing_condition_of_y_k} implies that $\xi^0=1$. Since the convex optimization problem~\eqref{convex_program_2} with $k=0$ has the unique solution $\hat x:=-1$, we get $x^1=3$. Further computation shows that $x^k=3$ for all $k\geq 2$. Thus, the DCA scheme yields the global solution $\hat x=3$.

3. For $x^0=-1$, since $\partial h(x^0)=[-1,0]$, by~\eqref{existing_condition_of_y_k} we can choose $\xi^0=0$. Since the convex optimization problem~\eqref{convex_program_2} with $k=0$ has the solution set $[-2,3]$, we can take $x^1=1$. Then, as $\partial h(x^1)=[0,1]$, by~\eqref{existing_condition_of_y_k} we can choose $\xi^1=0$. Since the solution of~\eqref{convex_program_2} with $k=1$ is $[-2,3]$, we can take  $x^2=-1$. Proceeding furthermore in the same manner, we obtain the iteration sequence $-1, 1, -1, 1,...$, which is divergent. Note that if we take $x^1\in [-2,-1)$, then the computation yields $x^k=-2$ for all $k\geq 2$, i.e., it leads to a local solution. Likewise, if we take $x^1\in (1,3]$, then the computation yields $x^k=3$ for all $k\geq 2$, i.e., the global unique solution is obtained.  

4. For $x^0\in (-1,1)$, an analysis similar to the one in the last case shows that we may obtain either a DCA sequence converging to the unique global solution $\hat x=3$, a local solution from  ${\mathcal S}_1\setminus {\mathcal S}$, a critical point from ${\mathcal S}_2\setminus {\mathcal S}_1$, or a divergent DCA sequence.
}
\end{Example}

To have cyclic DCA sequences, we can apply the idea of Pham Dinh and Le Thi~\cite[Section~4]{Tao_An_AMV97} in choosing a selection for each one of the subdifferential mappings $\partial h(.)$ and  $\partial (g+\delta_C)^*(.)$ to deal, respectively, with the conditions~\eqref{existing_condition_of_y_k} and~\eqref{existing_condition_of_x_k}.  

\medskip
First, suppose that the following assumption is fulfilled.

\begin{itemize}
	\item[{\rm \textbf{(A1)}}] \textit{$g:X\to\overline{\mathbb {R}}$ is a lower semicontinuous, proper, convex function, $h:X\to\overline{\mathbb{R}}$ a proper generalized polyhedral convex function, and $C\subset X$ a nonempty closed convex set with $({\rm dom}\, g)\cap C\neq\emptyset$.} 
	\end{itemize}

By~\textbf{(A1)}, $h$ is a proper generalized polyhedral convex function. Using~Theorem~4.14 and formula~(4.10) from~\cite{Luan_Yao_Yen_2018},  with $h$ playing the role of $f$, we can assert that there are finitely many nonempty generalized polyhedral sets in $X^*$, denoted by $P_1, P_2,\ldots, P_r$, such that for very $x\in {\rm dom}\,h$ there is a unique index $\ell$ from $P:=\{1,\ldots,r\}$ such that $\partial h(x)=P_\ell$.

Let $H:[C\cap {\rm dom}\,\partial h]\to X^*$, $x\mapsto H(x)$, be a selection of the subdifferential mapping $\partial h:[C\cap {\rm dom}\,\partial h]\rightrightarrows X^*$ and  $G_C:{\rm dom}\,\partial(g+\delta_C)^*\to X$, $\xi\mapsto G_C(\xi)$, be a selection of the subdifferential mapping $\partial (g+\delta_C)^*:{\rm dom}\,\partial(g+\delta_C)^*\rightrightarrows X$. To construct such a selection $H$, we pick from each set $P_\ell,$ where $\ell\in P$, one element $\eta^\ell$ and put $H(x)=\eta^\ell$ whenever $\partial h(x)=P_\ell$. With the help of the selections $H$ and $G_C$, the above DCA scheme has the next form.

\medskip
{\bf  DCA scheme 1}\\
{\it
	$\bullet$ Choose $x^0\in {\rm dom}\, (g+\delta_C)=({\rm dom}\, g)\cap C$.\\
	$\bullet$ For every integer $k\ge 0$, if $x^k\in C$ has been defined, then take
	\begin{equation}\label{xi_k}
		\xi^k=H(x^k).
	\end{equation}
	$\bullet$ For every integer $k\ge 0$, if $\xi^k$ has been defined and $\xi^k\in {\rm dom}\,\partial(g+\delta_C)^*$, then take
	\begin{equation}\label{x_k}
		x^{k+1}=G_C(\xi^k).
\end{equation}} 

Clearly, for a given initial point $x^0\in {\rm dom}\, (g+\delta_C)=({\rm dom}\, g)\cap C$, if the computation by DCA scheme~1 can be done for every step $k\in\mathbb N$, then we get a unique DCA sequence $\{x^k\}$ and a unique DCA sequence $\{\xi^k\}$ (otherwise, no DCA sequences are obtained). 

\begin{Theorem}\label{Thm_DCA2} Suppose that the assumption~{\rm \textbf{(A1)}} is satisfied and $\{x^k\}$ and $\{\xi^k\}$ are the DCA sequences constructed by \textbf{DCA scheme~1}. Then, there exist $\bar k\in\mathbb N$ and $p\in\mathbb N$ satisfying the following properties:
	\begin{itemize}
		\item[{\rm (a)}] From the index $\bar k$, both sequences $\{x^k\}$ and $\{\xi^k\}$ are cyclic with the period $p$, i.e., $x^{k+p}=x^{k}$ and $\xi^{k+p}=\xi^{k}$ for every $k\geq\bar k$. 
		\item[{\rm (b)}] From the index $\bar k$, the value of the objective function of~\eqref{DC_constraint_prob} is constant, i.e., $f(x^k)=f(\bar x^k)$ for every $k\geq\bar k$, where $f:=g-h$.
	\end{itemize}
\end{Theorem}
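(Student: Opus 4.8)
The plan is to exploit the crucial finiteness property built into \textbf{DCA scheme~1}: by construction the selection $H$ takes values in the finite set $\{\eta^1,\ldots,\eta^r\}$, because $H(x)=\eta^\ell$ whenever $\partial h(x)=P_\ell$ and, by Theorem~4.14 and formula~(4.10) of~\cite{Luan_Yao_Yen_2018}, there are only $r$ distinct subdifferential sets $P_1,\ldots,P_r$ as $x$ ranges over ${\rm dom}\,h$. Consequently the whole iteration collapses to the orbit of a single deterministic self-map of a finite set, and the elementary fact that every such orbit is eventually periodic will deliver assertion~(a). Throughout I treat $\{x^k\}$ and $\{\xi^k\}$ as the (infinite) sequences produced by the scheme, so that each update is well defined along the orbit.

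First I would record that the two update rules compose into one deterministic transition for the dual iterates. Setting $F:=H\circ G_C$, the recursions $\xi^k=H(x^k)$ and $x^{k+1}=G_C(\xi^k)$ give $\xi^{k+1}=H\big(G_C(\xi^k)\big)=F(\xi^k)$ for every $k\geq 0$, so that $\xi^k$ is the orbit of $\xi^0$ under $F$. Since each $\xi^k$ lies in $\{\eta^1,\ldots,\eta^r\}$, among the $r+1$ terms $\xi^0,\ldots,\xi^r$ two must coincide by the pigeonhole principle, say $\xi^a=\xi^b$ with $a<b$; putting $p:=b-a\geq 1$ and using that $F$ is single-valued, a straightforward induction yields $\xi^{k+p}=\xi^k$ for all $k\geq a$.

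Next I would transfer the periodicity from the dual to the primal iterates. For $k\geq a$ the relation $\xi^{k+p}=\xi^k$ together with $x^{k+1}=G_C(\xi^k)$ forces $x^{k+1+p}=G_C(\xi^{k+p})=G_C(\xi^k)=x^{k+1}$; hence $x^{m+p}=x^m$ for all $m\geq a+1$, and then $\xi^{m+p}=H(x^{m+p})=H(x^m)=\xi^m$ as well. Setting $\bar k:=a+1$ makes both sequences cyclic with period $p$ from the index $\bar k$, which is precisely assertion~(a).

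Finally, assertion~(b) follows by combining the cyclicity with the monotone descent in Theorem~\ref{Thm_DCA1}(d). For any $k\geq\bar k$ the chain $f(x^k)\geq f(x^{k+1})\geq\cdots\geq f(x^{k+p})$ holds, while cyclicity gives $f(x^{k+p})=f(x^k)$; thus every inequality in the chain is an equality and $f$ is constant along the sequence from the index $\bar k$. The only genuinely substantive point in the whole argument is the finiteness of the range of $H$, and that is exactly what the cited results of~\cite{Luan_Yao_Yen_2018} supply; once this is in hand, everything else is the elementary theory of eventually periodic orbits of a map on a finite set.
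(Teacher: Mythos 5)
Your proposal is correct and follows essentially the same route as the paper: finiteness of the range of the selections, a pigeonhole argument to find one repeated iterate, the determinism of DCA scheme~1 to propagate the repetition into eventual periodicity, and the descent property of Theorem~\ref{Thm_DCA1}(d) combined with cyclicity to get constancy of $f$. The only cosmetic difference is that you apply the pigeonhole principle to the dual sequence $\{\xi^k\}$ via the self-map $F=H\circ G_C$, whereas the paper applies it to the primal sequence $\{x^k\}$, which takes values in the finite set $\{G_C(\eta^1),\ldots,G_C(\eta^r)\}$; both are equivalent.
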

\begin{proof} From the construction of $H$ via $\eta^1,\ldots,\eta^r$ and~\eqref{xi_k} it follows that $$\xi^k=H(x^k)\in \left\{\eta^1,\ldots,\eta^r\right\}\quad (\forall k\in\mathbb N).$$ Then, by~\eqref{x_k} we have $x^k\in \left\{G_C(\eta^1),\ldots,G_C(\eta^r)\right\}$ for every $k\in\mathbb N$. Therefore, by the Dirichlet principle the must exist and $\ell\in P$ and a subsequence $\{x^{k'}\}$ of $\{x^k\}$ such that $x^{k'}=G_C(\eta^\ell)$ for all $k'$. Denote by $\bar k$ the smallest index $k'$ and choose $p\in\mathbb N$ such that $\bar k+p$ is the second-smallest index $k'$. In result, we get $x^{\bar k+p}=x^{\bar k}$. This and the rule~\eqref{xi_k} imply that $\xi^{\bar k+p}=\xi^{\bar k}$. Now, since the DCA sequence $\{x^k\}$ is uniquely defined by DCA scheme~1, one must have $x^{k+p}=x^{k}$ for every $k\geq\bar k$. Similarly, since the DCA sequence $\{\xi^k\}$ is also uniquely defined by the scheme, the equality $\xi^{k+p}=\xi^{k}$ holds for every $k\geq\bar k$. Thus,  assertion~(a) of the theorem has been proved.
	
To prove assertion~(b), we apply the last assertion of Theorem~\ref{Thm_DCA1} get~\eqref{decreasing}. Then, by the above cyclic property of $\{x^k\}$, we have   
$$f(x^{\bar k})\geq f(x^{\bar k+1})\geq \ldots \geq f(x^{\bar k+p})=f(x^{\bar k}).$$ It follows that 
$f(x^{\bar k})=f(x^{\bar k+1})= \ldots = f(x^{\bar k+p}).$ Therefore, $f(x^k)=f(\bar x^k)$ for every $k\geq\bar k$.

The proof is complete. $\hfill\Box$
\end{proof}

\medskip
Next, suppose that the following assumption is satisfied.

\begin{itemize}
	\item[{\rm \textbf{(A2)}}] \textit{$g:X\to\overline{\mathbb {R}}$ is a proper generalized polyhedral convex function, $h:X\to\overline{\mathbb{R}}$ a lower semicontinuous  convex function, and $C\subset X$ a nonempty generalized polyhedral set with $({\rm dom}\, g)\cap C\neq\emptyset$.}
\end{itemize}

By~\textbf{(A2)}, the conjugate function $(g+\delta_C)^*:X^*\to \overline{\mathbb R}$ is proper generalized polyhedral convex (see the proof of the second assertion of Theorem~\ref{Thm_duality_1}). Applying~\cite[Theorem~4.14]{Luan_Yao_Yen_2018} with $X^*$ playing the role of $X$ and $(g+\delta_C)^*$ playing the role of $f$ in that theorem, by~\cite[formula~(4.10)]{Luan_Yao_Yen_2018} we can find finitely many nonempty generalized polyhedral sets in $X$, denoted by $Q_1, Q_2,\ldots, Q_s$, such that for very $\xi\in {\rm dom}\,(g+\delta_C)^*$ there is a unique index $\ell$ from $S:=\{1,\ldots,s\}$ such that $\partial (g+\delta_C)^*(\xi)=Q_\ell$.

As before, let $H:[C\cap {\rm dom}\,\partial h]\to X^*$, $x\mapsto H(x)$, be a selection of the subdifferential mapping $\partial h:[C\cap {\rm dom}\,\partial h]\rightrightarrows X^*$ and  $G_C:{\rm dom}\,\partial(g+\delta_C)^*\to X$, $\xi\mapsto G_C(\xi)$, be a selection of the subdifferential mapping $\partial (g+\delta_C)^*:{\rm dom}\,\partial(g+\delta_C)^*\rightrightarrows X$. To construct such a selection $G_C$, it suffices to choose from each set $Q_\ell,$ where $\ell\in S$, one element $u^\ell$ and put $G_C(\xi)=u^\ell$ whenever $\partial (g+\delta_C)^*(\xi)=Q_\ell$. Suppose that this construction has been made. Thanks to the selections $H$ and $G_C$, the \textbf{DCA scheme} can be reformulated as the above \textbf{DCA scheme~1}.

\begin{Theorem}\label{Thm_DCA2a} Suppose that the assumption~{\rm \textbf{(A2)}} is fulfilled and $\{x^k\}$ and $\{\xi^k\}$ are the DCA sequences constructed by \textbf{DCA scheme~1}. Then, there exist $\bar k\in\mathbb N$ and $p\in\mathbb N$ satisfying the properties~{\rm (a)} and~{\rm (b)} in Theorem~\ref{Thm_DCA2}.
\end{Theorem}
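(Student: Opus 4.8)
The plan is to mirror the proof of Theorem~\ref{Thm_DCA2}, but with the roles of the primal iterates $\{x^k\}$ and the dual iterates $\{\xi^k\}$ interchanged, since now it is the conjugate function $(g+\delta_C)^*$ — rather than $h$ — that is generalized polyhedral convex. First I would exploit the finiteness built into the selection $G_C$. By assumption~\textbf{(A2)} and the construction preceding the statement, $G_C$ takes values in the finite set $\{u^1,\ldots,u^s\}$, one representative $u^\ell$ being chosen from each of the generalized polyhedral convex pieces $Q_\ell$, $\ell\in S$, of the subdifferential mapping $\partial(g+\delta_C)^*$. Consequently, from the update rule~\eqref{x_k} one has $x^{k+1}=G_C(\xi^k)\in\{u^1,\ldots,u^s\}$ for every $k\geq 0$, so the whole tail $\{x^k\}_{k\geq 1}$ of the primal DCA sequence lies in this finite set. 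This is the only step where the argument differs essentially from that of Theorem~\ref{Thm_DCA2}: there the finiteness of the range was forced on the dual sequence $\{\xi^k\}$ through the polyhedrality of $h$, whereas here it is forced on the primal sequence $\{x^k\}$ through the polyhedrality of $(g+\delta_C)^*$.

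Next I would apply the Dirichlet (pigeonhole) principle to the finite-valued sequence $\{x^k\}_{k\geq 1}$ to obtain an index $\bar k\geq 1$ and a period $p\in\mathbb N$, $p\geq 1$, with $x^{\bar k+p}=x^{\bar k}$; as in Theorem~\ref{Thm_DCA2}, I would take $\bar k$ to be the smallest index at which some value is attained and $\bar k+p$ the next index at which the same value recurs. Rule~\eqref{xi_k} then yields $\xi^{\bar k+p}=H(x^{\bar k+p})=H(x^{\bar k})=\xi^{\bar k}$. Because \textbf{DCA scheme~1} defines $\{x^k\}$ and $\{\xi^k\}$ uniquely through the single-valued selections $H$ and $G_C$ once $x^0$ is fixed, the coincidence $x^{\bar k+p}=x^{\bar k}$ propagates forward, giving $x^{k+p}=x^k$ and $\xi^{k+p}=\xi^k$ for all $k\geq\bar k$. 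This establishes property~(a).

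Finally, for property~(b) I would combine the monotone decrease~\eqref{decreasing} furnished by Theorem~\ref{Thm_DCA1}(d) with the cyclicity just obtained, exactly as in the proof of Theorem~\ref{Thm_DCA2}: along one full period the chain $f(x^{\bar k})\geq f(x^{\bar k+1})\geq\cdots\geq f(x^{\bar k+p})=f(x^{\bar k})$ forces all these values to coincide, and periodicity then propagates equality to give $f(x^k)=f(x^{\bar k})$ for every $k\geq\bar k$. The main (and essentially the only) point requiring care is the first step — recognizing that under~\textbf{(A2)} one cannot control $\partial h$ directly, since $h$ is merely lower semicontinuous convex, and that the required finiteness must instead be extracted from the dual side via $(g+\delta_C)^*$. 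Once this reindexing is in place, the pigeonhole-plus-uniqueness machinery used for Theorem~\ref{Thm_DCA2} applies verbatim.
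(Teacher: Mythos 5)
Your proposal is correct and follows essentially the same route as the paper's own proof: the polyhedrality of $(g+\delta_C)^*$ under \textbf{(A2)} forces $x^k\in\{u^1,\ldots,u^s\}$, the Dirichlet principle produces $\bar k$ and $p$ with $x^{\bar k+p}=x^{\bar k}$, rule~\eqref{xi_k} transfers this to $\xi^{\bar k+p}=\xi^{\bar k}$, and uniqueness of the sequences plus the monotone decrease from Theorem~\ref{Thm_DCA1}(d) yield properties~(a) and~(b) exactly as in the final part of the proof of Theorem~\ref{Thm_DCA2}. No gaps; your observation that the finiteness is now extracted on the primal side rather than the dual side is precisely the point of the paper's argument.
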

\begin{proof} From~\eqref{x_k} we can deduce that $x^k\in \{u^1,\ldots,u^s\}$ for every $k\in\mathbb N$. So, there exists $\ell\in S$ and a subsequence $\{x^{k'}\}$ of $\{x^k\}$ such that $x^{k'}=u^\ell$ for all $k'$. Let $\bar k$ the smallest index $k'$ and $p\in\mathbb N$ such that $\bar k+p$ is the second-smallest index $k'$. Then, it holds that $x^{\bar k+p}=x^{\bar k}$. Combining this with the rule~\eqref{xi_k} yields $\xi^{\bar k+p}=\xi^{\bar k}$.
	
To show that the properties~{\rm (a)} and~{\rm (b)} in Theorem~\ref{Thm_DCA2} are valid for the chosen numbers $\bar k$ and $p$, we need only to repeat the arguments used in the final part of the preceding proof. $\hfill\Box$
\end{proof}

\section{Conclusions}\label{sect_Conclusions}
\markboth{\sc v. t. huong, d. t. k. huyen, and n.~d.~yen}{\sc  v. t. huong, d. t. k. huyen, and n.~d.~yen}
\setcounter{equation}{0}

Optimality conditions and DCA schemes for general DC optimization problems on locally convex Hausdorff topological vector space have been considered in this paper. When either the second component of the objective function is a generalized polyhedral convex function or the first component of the objective function is a generalized polyhedral convex function and the constraint set is generalized polyhedral convex, we have shown that sharper results on optimality conditions, the local solution set, the global solution set, and solution algorithms can be obtained. 

\section*{Acknowledgements}
\markboth{\sc v. t. huong, d. t. k. huyen, and n.~d.~yen}{\sc  v. t. huong, d. t. k. huyen, and n.~d.~yen}
\setcounter{equation}{0}

This research was supported by the project NCXS02.01/24-25 of Vietnam Academy of Science and Technology. The authors are indebted to the Vietnam Institute for Advanced Study in Mathematics for hospitality during their recent stay. The authors wish to thank Dr.~Nguyen Ngoc Luan for a useful remark which helped us to simplify the proof of Lemma~\ref{Lem3}. Duong Thi Kim Huyen would like to thank Phenikaa University for creating the favorable environment for research and teaching.

\end{document}